\newcommand{\stkout}[1]{\ifmmode\text{\sout{\ensuremath{#1}}}\else\sout{#1}\fi}
\newcommand{\RR}{{\mathbb R}}
\newcommand{\NN}{{\mathbb N}}
\newcommand{\ZZ}{{\mathbb Z}}
\newtheorem{theorem}{Theorem}[section]
\newtheorem{lemma}[theorem]{Lemma}
\newtheorem{proposition}[theorem]{Proposition}
\newtheorem{conjecture}[theorem]{Conjecture}
\newtheorem{definition}[theorem]{Definition}
\newtheorem{remark}[theorem]{Remark}
\newtheorem{example}[theorem]{Example}
\title{Multiple multivariate subdivision schemes: matrix and operator approaches}
\author[1]{Maria Charina}
\author[1]{Thomas Mejstrik}
\address[1]{Fakult\"at f\"ur Mathematik, Universit\"at Wien, Oskar-Morgenstern-Platz 1, 1090 Wien (Austria)}
\begin{document}

\begin{keyword} multiple subdivision schemes \sep convergence \sep joint spectral radius \sep  restricted spectral radius
\\[1em]
\noindent{\textit{Classification (MSC):} 65D17, 15A60}
\end{keyword}

\begin{abstract}
This paper extends the matrix based approach to the setting of multiple subdivision schemes
studied in~\cite{Sauer2012}.
Multiple subdivision schemes, in contrast to stationary and non-stationary schemes, allow for level dependent subdivision 
weights and for level dependent choice of the dilation matrices.
The latter property of multiple subdivision makes the standard definition of the transition matrices,
crucial ingredient of the matrix approach in the stationary and non-stationary settings, inapplicable. We show
how to avoid this obstacle and characterize the convergence of multiple subdivision schemes in terms of the
joint spectral radius of certain square matrices derived from subdivision weights. We illustrate our results with
several examples.
\end{abstract}

\maketitle

\section{Introduction}

The main contribution of this paper is the adaptation of the well known joint spectral radius 
approach (matrix based approach) to the setting of multiple subdivision. The connection between 
stationary (level independent) subdivision and joint spectral radius techniques was 
established in~\cite{DL}. In~\cite{CCGP16}, the authors introduced the matrix approach into the setting of non-stationary 
subdivision (with level dependent weights).  In both cases, the essential ingredient of the spectral radius technique are the so-called
transition matrices whose entries depend on the subdivision weights and whose structure is inherited from the
dilation matrix. The main challenge of adapting the matrix approach to the case of level dependent
dilation matrices is in combining their properties into an appropriate structure of the corresponding transition matrices. 

Subdivision schemes are recursive algorithms for generating meshes
in $\RR^d$, usually $d=2,3$. If the scheme is convergent, then the
sequence of meshes converges to a smooth limit curve or surface.
The vertices of these meshes are computed by means of weighted local
averages of the vertices of the mesh from the previous level of
the subdivision recursion. The topology of the regular mesh is
characterized by the integer dilation matrix $M \in \ZZ^{s \times
s}$ all of whose eigenvalues are larger than $1$ in the absolute
value. In the case of a \emph{multiple subdivision scheme}, the subdivision weights of local averages and the 
dilation matrices may vary depending on the level of the subdivision recursion. Multiple subdivision
schemes were introduced and studied in~\cite{Sauer2012}. 

The theory of subdivision schemes has had an impact on several applied
areas of mathematics and engineering and, in return, has been
greatly influenced by applications. First subdivision schemes with level independent subdivision weights 
and dilation matrices appeared in the `60s and are related to the
wavelet and frame theory whose applications are e.g. in signal and
image processing and in progressive geometry processing targeting
faster data transfer via internet. Recently, isogeometric
analysis and biological imaging promoted subdivision schemes with level-dependent weights 
by exploiting their ability to generate and reproduce exponential polynomials.
Multiple subdivision schemes are building blocks for processing of images with 
anisotropic directional features~\cite{Cotronei2015, KuSa} and for multigrid methods for solving anisotropic PDEs~\cite{CharinaDRT2}.

The most important properties of curves or surfaces generated by subdivision are their shape and smoothness.  
In the case of level independent or dependent subdivision weights, these properties are well understood. The 
subdivision shapes are characterized in terms of algebraic properties of subdivision symbols~\cite{CDM, CCRomani, JePlo, Jia, JJ, MoeSau}. 
The smoothness of 
subdivision is characterized either using the joint spectral radius~\cite{CHM, CCGP16, CP2017, CJR, DL, H1} 
or restricted spectral radius techniques~\cite{CDM, CCS04, DynLevin02}. 
Recent advances~\cite{GP1, GP2} in the exact computation of the joint spectral radius of compact sets of square matrices 
provide efficient methods for checking both H\"older and Sobolev regularity of subdivision. The study of the
properties of multiple subdivision is at its very beginning. The convergence analysis of multiple subdivision 
in terms of the restricted spectral radius is given in~\cite{Sauer2012}.  Our main result, Theorem~\ref{th:main}, relates
the convergence analysis of multiple subdivision and the joint spectral radius techniques and allows us
to use the efficient methods from~\cite{GP1}.

The paper is organized as follows. In section~\ref{sec:background} we recall the basic facts about
subdivision and multiple subdivision in particular. Section~\ref{sec_transition_matrices_supports} is devoted to
the construction of transition matrices with certain important invariance properties. These properties are crucial 
for our comparison, see section~\ref{sec:JSRvRSR},  of the restricted and joint spectral radius techniques in the context
of multiple subdivision. The applications of our theoretical results are given in section~\ref{sec:examples}.

\section{Background and notation} \label{sec:background}

\noindent The so-called matrix (or, the joint spectral radius) approach studies the spectral properties of
finite or compact sets of square matrices derived from the subdivision masks, see e.g.~\cite{CHM, CCGP16, CP2017, CJR, CH, DL, H1,HJ}.

\begin{definition}[\cite{Rota60}]\label{def_JSR}
 The \emph{joint spectral radius} of a finite set $\mathcal{A}$ of square matrices $A_r \in \mathcal{A}$ is defined by
\begin{equation}\label{equ_jsr}
  \rho(\mathcal{A})=\lim_{n\rightarrow\infty}\max_{A_r \in\mathcal{A}} \big\| \prod_{r=1}^n A_r \;\big\|^{1/n}.
\end{equation}
\end{definition}
\noindent The limit in the Definition~\ref{def_JSR} exists and is independent of the matrix norm~\cite[Proposition~1]{Rota60}.
The joint spectral radius quantifies the joint expanding properties of the matrices in~$\mathcal{A}$.

\subsection{Properties of the dilation matrices} \label{subsec:background_dilation}

\noindent  In the context of multiple subdivision the concept of the joint spectral radius is also used to describe
the joint expanding properties of several dilation matrices. This is a generalization of the standard requirement on the single dilation matrix to be expanding.

\begin{definition}\label{def_jointly_expanding}
A finite set of invertible matrices $\{M_j\in\ZZ^{s\times s}\;:\; j=1,\ldots,J\}$ is \emph{jointly expanding} if
	\begin{equation*}
	\rho(\{M_j^{-1}\;:\; j=1,\ldots,J\})<1.
	\end{equation*}
\end{definition}

\noindent Every dilation matrix $M_j$ has a possibly different digit set, which we define next.

\begin{definition}\label{def_digit_set} Let $j \in \{1, \ldots, J\}$.
A \emph{digit set $D_j\subset\ZZ^s$} corresponding to a dilation matrix $M_j$ is a complete set of representatives of the quotient group $\ZZ^s/M_j\ZZ^s=\{\alpha+M_j\ZZ^s:\alpha\in\ZZ^s\}$, i.e.\ $D_j\simeq \ZZ^s/M_j\ZZ^s$. The elements of a digit set are called \emph{digits}.
\end{definition}

\noindent We settle for the standard choice $D_j= \ZZ^s \cap M_j [0, 1)^s$, $j=1,\ldots,J$, implying that $0 \in D_j$. This choice of the digit sets does not necessarily lead to a tiling, but rather to a covering of $\RR^s$, see e.g.~\cite[section~2.2.2]{CHM}. We would like to emphasize that our results in section~\ref{sec:JSRvRSR} do not depend on the tiling property of the attractors.

\begin{definition}\label{def_attractor}
Let $\left\{ M_j\in\ZZ^{s\times s} \;:\; j=1,\ldots,J\right\}$ be jointly expanding with corresponding digit sets $D=\{D_j\subset\ZZ^s\;:\; j=1,\ldots,J\}$. We define the \emph{attractors} (subsets of $\RR^s$) associated to
$\boldsymbol{j}=(j_\ell)_{\ell \in \NN}$, $j_{\ell} \in \{1,\ldots,J\}$
\begin{equation}\label{equ_attractor}
K_{D\!,\,\boldsymbol{j}}=\operatorname{clos} \left(
M_{j_1}^{-1}D_{\!j_1}+M_{j_1}^{-1}M_{j_{2}}^{-1}D_{\!j_2}+\cdots \right)
= \operatorname{clos} \left(\sum_{r=1}^\infty  \Big(\prod_{\ell=1}^{r} M_{j_{\ell}}^{-1}\Big)\;D_{\!j_{r}} \right).
\end{equation}
\end{definition}

\noindent Note that the structure of the attractor $K_{D\!,\,\boldsymbol{j}}$ depends on the order of the indices in $\boldsymbol{j}$.

\noindent
The following properties of the attractors are reminiscent of the stationary and non-stationary settings.

\begin{lemma}\label{thm_attractor} 
Let 
$\left\{ M_j\in\ZZ^{s\times s} \;:\; j=1,\ldots,J\,\right\}$ be jointly expanding with corresponding digit sets 
$D=\{D_j\subset\ZZ^s\;:\; j=1,\ldots,J\,\}$ and
$\boldsymbol{j}=(j_\ell)_{\ell\in\NN}$, $j_\ell\in\{1,\ldots,J\}$.
Then $K_{D\!,\,\boldsymbol{j}}$ is compact.
\end{lemma}
\begin{proof} The boundedness of $K_{D\!,\,\boldsymbol{j}}$ follows, by~\cite[Proposition~1]{Rota60}, due to the existence of a matrix norm $\|\cdot\|$ such that
$\displaystyle C_1=\max_{j=1,\ldots,J}\|M_j^{-1}\|<1$, and the fact that the sets $D_j$, $j=1,\ldots,J$, are finite, i.e.\ bounded by $0<C_2< \infty$.
Indeed, for every $x \in K_{D\!,\,\boldsymbol{j}}$, we have
\begin{equation*}
\|x\|= \| M_{j_1}^{-1} d_{j_1} + M_{j_1}^{-1} M_{j_2}^{-1} d_{j_2}+\cdots\| 
 \leq  
 C_2 \sum_{\ell=1}^\infty C_1^{\ell} = \frac{C_2 C_1}{1-C_1}, 
 \quad 
 d_{j_\ell} \in D_{\!j_\ell}. 
 \qedhere
\end{equation*}
\end{proof}

\subsection{Multiple subdivision and its properties} \label{subsec:background_subdivision}

\noindent The definition of subdivision operators associated to finite sets of finitely supported masks and jointly expanding dilation matrices is done analogously to the stationary or the non-stationary case.

\begin{definition}\label{def_subdivision_operator} Let $j \in \{1, \ldots,J\}$, $J \in \NN$.
For a \emph{mask} $a_j\in\ell_0(\ZZ^s)$  and a dilation matrix $M_j \in\ZZ^{s\times s}$,  the \emph{subdivision operator} $S_j:\ell(\ZZ^s)\rightarrow \ell(\ZZ^s)$  defined by the pair $(a_j,M_j)$ is given by
\begin{equation}\label{equ_subdivision_opertor}
 S_j c(\alpha)=\sum_{\beta\in\ZZ^s} a_j(\alpha - M_j\beta)c(\beta),\quad\alpha\in\ZZ^s.
\end{equation}
\end{definition}

\noindent 
Without loss of generality, we assume that $0\in\operatorname{supp}(a_j)$, $j=1,\ldots,J$.

\noindent \vspace{0.2cm}
The concept of multiple subdivision schemes was introduced in~\cite{Sauer2012}.

\begin{definition} \label{def_subdivision_scheme} Given $\{a_j \in \ell_0(\ZZ^s)\;:\; j=1,\ldots,J\}$
and jointly expanding $\{M_j\in\ZZ^{s\times s}\;:\; j=1,\ldots,J\}$.
\begin{description}
\item[$(i)$] We define the \emph{finite set $\mathcal{S}$ of subdivision operators} $S_j$  by
\begin{equation} \label{def:setS}
\mathcal{S}=\left\{S_j=(a_j,M_j)\;:\;  a_j\in \ell_0(\ZZ^s),\; M_j\in \ZZ^{s\times s},\;j=1,\ldots,J\right\}.
\end{equation}
\item[$(ii)$] A sequence $(S_{j_\ell})_{\ell \in \NN}\in\mathcal{S}^\NN$, $j_\ell\in \{1,\ldots,J\}$,
is called a \emph{(multiple) subdivision scheme}.
\end{description}
\end{definition}

\begin{remark} The concept of multiple subdivision generalizes stationary and 
non-stationary settings. Indeed, 
the set $\mathcal{S}^\NN$ of all possible (multiple) subdivision schemes contains stationary subdivision 
schemes~-- the sequences $(S)_{\ell \in \NN} \in \mathcal{S}^\NN$ with
$S \in  \mathcal{S}$ defined by the pair $(a,M)$.  The set $\mathcal{S}^\NN$ also  includes 
certain non-stationary subdivision schemes~-- the sequences $(S_{j_\ell})_{\ell \in \NN} \in \mathcal{S}^\NN$ with the subdivision operators $S_{j_\ell}  \in  \mathcal{S}$ defined by the pairs $(a_{j_\ell},M)$. 
\end{remark}

\begin{definition}\label{def_ss_conv_to_Wkp_functions} Let $\mathcal{S}$
be a finite set of subdivision operators.
\begin{description}
\item[$(i)$] We say that a (multiple) subdivision scheme $(S_{j_\ell})_{\ell \in \NN}\in\mathcal{S}^\NN$ is \emph{convergent}
if for every sequence $c\in\ell_\infty(\ZZ^s)$ there exists a \emph{function} $g_{c,\boldsymbol{j}}\in C(\RR^s)$ (which is non-zero for at least one sequence $c$) such that
\begin{equation}\label{equ_ss_conv_to_C0_functions}
 \lim_{n\rightarrow\infty} \Big\| g_{c,\boldsymbol{j}}(M_{j_1}^{-1}\cdots M_{j_n}^{-1}\cdot) - S_{j_n}\cdots S_{j_1}c\Big\| _{\ell_\infty} = 0, \quad \quad
 \boldsymbol{j}=(j_\ell)_{\ell \in \NN}.
\end{equation}
\item[$(ii)$] We say that $\mathcal{S}^\NN$ is convergent, if every subdivision scheme in $\mathcal{S}^\NN$ is convergent.
\end{description}
\end{definition}

\begin{remark} 
For the limit function $g_{c,\boldsymbol{j}}$ in Definition~\ref{def_ss_conv_to_Wkp_functions} we write
\begin{equation}\label{equ_function_as_limit_of_subdiv_op}
 g_{c,\boldsymbol{j}} = \lim_{n\rightarrow \infty} S_{j_n}\cdots S_{j_2} S_{j_1} c, \quad
 \boldsymbol{j}=(j_\ell)_{\ell \in \NN}.
\end{equation}
\end{remark}

\noindent The necessary conditions, the sum rules of order one, for convergence of stationary subdivision schemes in $\mathcal{S}^\NN$ are well known, see e.g.~\cite{CDM,DynLevin02, JePlo, JJ}.

\begin{lemma}\label{thm_sum_rule_zero} Let $\mathcal{S}^\NN$ be convergent. Then every stationary subdivision
scheme defined by the pair $(a_j,M_j)$, $j \in \{1, \ldots,J\}$ is convergent and its mask $a_j$ satisfies
the sum rules of order one,
\begin{equation}\label{equ_sum_rule_zero}
   \sum_{\beta\in\ZZ^s} a_j(M_j\beta+\alpha)=1, \quad\alpha\in\ZZ^s.
\end{equation}
\end{lemma}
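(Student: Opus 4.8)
The plan is to reduce the statement to the classical, well-understood stationary case. Fix $j \in \{1,\ldots,J\}$ and consider the constant sequence $\boldsymbol{j} = (j,j,j,\ldots)$. By hypothesis $\mathcal{S}^\NN$ is convergent, so in particular the subdivision scheme $(S_{j_\ell})_{\ell\in\NN}$ with every $j_\ell = j$ is convergent in the sense of Definition~\ref{def_ss_conv_to_Wkp_functions}~$(i)$. But for this constant sequence all dilation matrices equal $M_j$ and all subdivision operators equal $S_j$, so the convergence condition~\eqref{equ_ss_conv_to_C0_functions} becomes exactly
\begin{equation*}
 \lim_{n\rightarrow\infty}\Big\| g_{c}\big(M_j^{-n}\cdot\big) - S_j^n c\Big\|_{\ell_\infty} = 0,
\end{equation*}
which is the standard definition of convergence of the stationary scheme defined by the pair $(a_j,M_j)$. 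This establishes the first assertion of the lemma.

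Next I would derive the sum rules from convergence of the stationary scheme. The key step is to test convergence on the constant data sequence $c \equiv 1$. One shows that for a convergent stationary scheme the limit function $g_1$ generated from $c\equiv 1$ must be the constant function $1$: indeed, for any $c\in\ell_\infty(\ZZ^s)$ one has $g_{c} = \lim_n S_j^n c$, and combining this with a partition-of-unity / reproduction argument (or, alternatively, arguing that a non-trivial limit function forces $g_1\equiv 1$ because otherwise one produces a convergent scheme with zero limit for all data, contradicting the non-triviality clause in Definition~\ref{def_ss_conv_to_Wkp_functions}~$(i)$). Given $g_1 \equiv 1$, I would split the subdivision sum~\eqref{equ_subdivision_opertor} according to the residue class of $\alpha$ modulo $M_j\ZZ^s$: writing $\alpha = M_j\beta_0 + \gamma$ one checks that
\begin{equation*}
 S_j 1(\alpha) = \sum_{\beta\in\ZZ^s} a_j(\alpha - M_j\beta) = \sum_{\beta\in\ZZ^s} a_j\big(M_j\beta + \gamma\big),
\end{equation*}
so $S_j 1$ is the sequence whose value on the residue class $\gamma + M_j\ZZ^s$ equals $\sum_{\beta} a_j(M_j\beta + \gamma)$. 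Iterating, $S_j^n 1$ stabilizes on these residue classes, and the requirement that $S_j^n 1 \to 1$ uniformly forces each of these $|D_j|$ numbers to equal $1$, which is precisely~\eqref{equ_sum_rule_zero}.

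The main obstacle, and the point requiring the most care, is the step that pins down the limit function of $c\equiv 1$ to be the constant $1$ — equivalently, ruling out that the convergence in Definition~\ref{def_ss_conv_to_Wkp_functions}~$(i)$ is satisfied only with trivial (identically zero) limits for most data while remaining non-trivial for some exotic sequence. In the classical references this is handled by first proving that convergence forces the limit operator to reproduce constants (a mask normalization argument using boundedness of $\|S_j^n\|$ on $\ell_\infty$ together with the existence of at least one non-zero limit), and I would follow that route, citing~\cite{CDM,DynLevin02, JePlo, JJ} for the stationary ingredients and only adapting notation. The rest — the residue-class bookkeeping and the passage to the limit — is routine and independent of $j$, so the argument applies uniformly to each $j\in\{1,\ldots,J\}$.
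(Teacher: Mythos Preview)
Your proposal is correct and follows precisely the route the paper takes: the paper does not supply a proof at all but simply remarks that the sum rules are ``well known'' necessary conditions for stationary convergence and cites~\cite{CDM,DynLevin02,JePlo,JJ}, relying implicitly on the trivial observation that each constant sequence $\boldsymbol{j}=(j,j,\ldots)$ lies in $\mathcal{S}^\NN$. Your write-up is in fact more detailed than what the paper offers; the residue-class argument and the discussion of the non-triviality clause are exactly the content of those classical references.
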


\noindent The result of Lemma~\ref{thm_sum_rule_zero} gives rise to the following assumption.

\smallskip
\noindent \textbf{Assumption S:} {\it We assume that the masks $a_j$, $j=1,\ldots,J$, in $\mathcal{S}$ satisfy sum rules of order one.}
\smallskip

\noindent Furthermore, if $\mathcal{S}^\NN$ is convergent, then every  (multiple) subdivision scheme in $\mathcal{S}^\NN$ possesses a sequence of basic limit functions. Similarly to the non-stationary setting, the concept of refinability is defined for the basic limit functions generated by the certain (multiple) subdivision schemes $(S_{j_\ell})_{\ell \ge r}\in\mathcal{S}^\NN$, $r \in \NN$, related by the shift in the ordering of the corresponding subdivision operators. To indicate this shift we introduce the following
sequence $\boldsymbol{j}^{[r]}$. 

\begin{definition}
Let $\boldsymbol{j}=(j_\ell)_{\ell\in\NN}$, $j_\ell\in\{1,\ldots,J\}$. For $r\in\NN$ we define a
shifted sequence 
$$\boldsymbol{j}^{[r]}=(j_r,j_{r+1},j_{r+2},\cdots)=(j_{\ell+r-1})_{\ell\in\NN}.$$
\end{definition}


\begin{definition}\label{def_basic_limit_function} 
For a (multiple) convergent subdivision scheme $(S_{j_\ell})_{\ell \in \NN}\in\mathcal{S}^\NN$, we define the sequence of \emph{basic limit functions}
\begin{equation}\label{equ_basic_limit_function}
 \phi_{\boldsymbol{j}^{[r]}} = 
 \lim_{n \rightarrow \infty}  S_{j_{r+n}}\cdots S_{j_{r+1}} S_{j_r} \delta
 ,\quad
 \delta(\alpha)=\left\{\begin{array}{rr} 1, & \alpha=0 \\ 0, & \text{otherwise} \end{array}\right.
 ,\quad
 r\in\NN.
\end{equation}
If the scheme $(S_{j_\ell})_{\ell \in \NN}$ is stationary, i.e. $S_{j_\ell}=S$ for all
$\ell \in \NN$, then $\phi_{\boldsymbol{j}^{[r]}}=\phi$ for all $r \in \NN$.
\end{definition}

\begin{remark} Note that $\phi_{\boldsymbol{j}^{[r]}}$, $r=2,3, \ldots$, by themselves are limits of certain subdivision schemes in $\mathcal{S}^\NN$.
\end{remark}

\noindent The proof of the mutual refinability of the functions $\phi_{\boldsymbol{j}^{[r]}}$, $r \in \NN$, is analogous to stationary or
non-stationary settings~\cite[Theorem~2.1]{CDM}.

\begin{lemma}\label{thm_blf_properties}
Let $(S_{j_\ell})_{\ell \in \NN}\in\mathcal{S}^\NN$ be a convergent subdivision scheme. Then its basic limit functions $\phi_{\boldsymbol{j}^{[r]}}$, $r \in \NN$,
are mutually refinable, i.e.\ they satisfy the system of refinement equations\
\begin{equation}\label{equ_refinement_equation_1}
\phi_{\boldsymbol{j}^{[r]}}(x)=
\sum_{\alpha\in\ZZ^s} a_{j_r}(\alpha)
 \; \phi_{\boldsymbol{j}^{[r+1]}} (M_{j_r} x -\alpha)
, \quad x\in\RR^s
,\quad r\in\NN.
\end{equation}
\end{lemma}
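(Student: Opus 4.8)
The plan is to unwind the definition~\eqref{equ_basic_limit_function} of the basic limit functions and to exploit linearity of the subdivision operators together with the way they intertwine integer shifts with dilated shifts. First I would record two elementary facts. One: by~\eqref{equ_subdivision_opertor}, $S_{j_r}\delta=a_{j_r}$ as an element of $\ell_0(\ZZ^s)$, so $S_{j_r}\delta=\sum_{\alpha\in\ZZ^s}a_{j_r}(\alpha)\,\delta(\cdot-\alpha)$ is a \emph{finite} sum. Two: every subdivision operator obeys $S_j\bigl(c(\cdot-\alpha)\bigr)=(S_jc)(\cdot-M_j\alpha)$ for $c\in\ell(\ZZ^s)$ and $\alpha\in\ZZ^s$; iterating this identity along a finite block gives $S_{j_{r+n}}\cdots S_{j_{r+1}}\bigl(\delta(\cdot-\alpha)\bigr)=\bigl(S_{j_{r+n}}\cdots S_{j_{r+1}}\delta\bigr)\bigl(\cdot-M_{j_{r+n}}\cdots M_{j_{r+1}}\alpha\bigr)$.

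Combining the two facts with linearity of $S_{j_{r+n}}\cdots S_{j_{r+1}}$ yields, for every $\gamma\in\ZZ^s$,
\[
\bigl(S_{j_{r+n}}\cdots S_{j_r}\delta\bigr)(\gamma)=\sum_{\alpha\in\ZZ^s}a_{j_r}(\alpha)\,\bigl(S_{j_{r+n}}\cdots S_{j_{r+1}}\delta\bigr)\bigl(\gamma-M_{j_{r+n}}\cdots M_{j_{r+1}}\alpha\bigr).
\]
Next I would invoke convergence of $\mathcal{S}^\NN$ twice, in the form~\eqref{equ_ss_conv_to_C0_functions} with initial datum $\delta$. Applied to the scheme $\boldsymbol{j}^{[r+1]}$ it gives $\bigl\|S_{j_{r+n}}\cdots S_{j_{r+1}}\delta-\phi_{\boldsymbol{j}^{[r+1]}}\bigl(M_{j_{r+1}}^{-1}\cdots M_{j_{r+n}}^{-1}\,\cdot\,\bigr)\bigr\|_{\ell_\infty}\to0$; since $M_{j_{r+1}}^{-1}\cdots M_{j_{r+n}}^{-1}M_{j_{r+n}}\cdots M_{j_{r+1}}\alpha=\alpha$, the right-hand side of the displayed identity then lies within $\bigl(\sum_\alpha|a_{j_r}(\alpha)|\bigr)\cdot o(1)$ of $\sum_\alpha a_{j_r}(\alpha)\,\phi_{\boldsymbol{j}^{[r+1]}}\bigl(M_{j_{r+1}}^{-1}\cdots M_{j_{r+n}}^{-1}\gamma-\alpha\bigr)$, uniformly in $\gamma$. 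Applied to the scheme $\boldsymbol{j}^{[r]}$ it gives $\bigl(S_{j_{r+n}}\cdots S_{j_r}\delta\bigr)(\gamma)=\phi_{\boldsymbol{j}^{[r]}}\bigl(M_{j_r}^{-1}\cdots M_{j_{r+n}}^{-1}\gamma\bigr)+o(1)$, again uniformly in $\gamma$. Putting $x:=M_{j_r}^{-1}\cdots M_{j_{r+n}}^{-1}\gamma$, so that $M_{j_{r+1}}^{-1}\cdots M_{j_{r+n}}^{-1}\gamma=M_{j_r}x$, the two estimates combine into
\[
\phi_{\boldsymbol{j}^{[r]}}(x)=\sum_{\alpha\in\ZZ^s}a_{j_r}(\alpha)\,\phi_{\boldsymbol{j}^{[r+1]}}(M_{j_r}x-\alpha)+o(1)\quad(n\to\infty),\qquad x\in M_{j_r}^{-1}\cdots M_{j_{r+n}}^{-1}\ZZ^s.
\]

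It then remains to promote this identity from the lattice points to all of $\RR^s$. Since $\{M_j^{-1}\;:\;j=1,\ldots,J\}$ is jointly expanding, there is, by~\cite[Proposition~1]{Rota60} as in the proof of Lemma~\ref{thm_attractor}, a matrix norm with $\max_j\|M_j^{-1}\|=\rho'<1$; hence $\|M_{j_r}^{-1}\cdots M_{j_{r+n}}^{-1}\|\le(\rho')^{n+1}\to0$ and the lattices $M_{j_r}^{-1}\cdots M_{j_{r+n}}^{-1}\ZZ^s$ become dense in $\RR^s$. By Definition~\ref{def_ss_conv_to_Wkp_functions}$(i)$ both $\phi_{\boldsymbol{j}^{[r]}}$ and $\phi_{\boldsymbol{j}^{[r+1]}}$ belong to $C(\RR^s)$, so $x\mapsto\sum_\alpha a_{j_r}(\alpha)\phi_{\boldsymbol{j}^{[r+1]}}(M_{j_r}x-\alpha)$ is continuous, being a finite sum. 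Fixing $x\in\RR^s$ and choosing $\gamma_n\in\ZZ^s$ with $x_n:=M_{j_r}^{-1}\cdots M_{j_{r+n}}^{-1}\gamma_n\to x$, evaluating the last display at $x=x_n$ and letting $n\to\infty$ yields, by continuity of both sides, exactly~\eqref{equ_refinement_equation_1}. I expect the only point that needs genuine care to be the bookkeeping of the two different dilation scalings attached to $\boldsymbol{j}^{[r]}$ and $\boldsymbol{j}^{[r+1]}$, and verifying that the $o(1)$ terms are uniform in $\gamma$ (they are, being $\ell_\infty$ bounds times the fixed constant $\sum_\alpha|a_{j_r}(\alpha)|$), which is what makes the final density--continuity step legitimate.
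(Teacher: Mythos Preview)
Your argument is correct and is precisely the standard one the paper has in mind: the authors do not give a proof but simply state that it ``is analogous to stationary or non-stationary settings~\cite[Theorem~2.1]{CDM}'', and your write-up is exactly that argument carried out in the multiple-dilation context. The bookkeeping of the two scalings and the uniformity of the $o(1)$ terms are handled correctly.
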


\noindent For a given $c\in\ell_\infty(\ZZ^s)$, the limit function
$g_{c,\boldsymbol{j}}$ in~\eqref{equ_function_as_limit_of_subdiv_op}
of the subdivision scheme $(S_{j_\ell})_{\ell \in \NN}\in\mathcal{S}^\NN$ can be written as a linear combination of the integer shifts of the
corresponding basic limit function $\phi_{\boldsymbol{j}^{[1]}}$. Thus, the convergence analysis of $\mathcal{S}^\NN$ is equivalent to the analysis of
uniform continuity of the corresponding basic limit functions. In section~\ref{sec_transition_matrices_supports}, we
show how to rewrite~\eqref{equ_refinement_equation_1} in an equivalent vector-valued form, where the summation
in~\eqref{equ_refinement_equation_1} is replaced by a matrix vector multiplication. To do that we need to gain
more insight about the structure of the supports of the basic limit functions. See e.g.~\cite{CohenDyn} for details
in the stationary and non-stationary settings.

\noindent The straightforward observation that the compact sets
\begin{equation} \label{def:KA}
K_{A,\,{\boldsymbol{j}^{[r]}}}=
\operatorname{clos} \left(\sum_{\ell=r}^\infty  \Big(\prod_{i=1}^{\ell} M_{j_{i}}^{-1}\Big)\;\operatorname{supp}(a_{j_{\ell}}) \right),  \quad A={\{\operatorname{supp}(a_j) \;:\; j=1, \ldots,J\}}, \quad r \in \NN,
\end{equation}
determine the re-parametrization  (see e.g.~\cite[(1.2)]{CCRomani} in the non-stationary case) for the
subdivision sequences that approximate the values of $(\phi_{\boldsymbol{j}^{[r]}})_{{r \in \NN}}$ implies the following result.

\begin{lemma}\label{thm_supp_refinement_equation}
Let $(S_{j_\ell})_{\ell \in \NN}\in\mathcal{S}^\NN$ be a convergent subdivision scheme
and  $A={\{\operatorname{supp}(a_j) \;:\; j=1, \ldots,J\}}$. Then the supports of the corresponding basic limit
functions $\phi_{\boldsymbol{j}^{[r]}}$, $r \in \NN$, satisfy
\begin{equation}\label{equ_support_1}
\operatorname{supp}(\phi_{\boldsymbol{j}^{[r]}}) \subseteq K_{A,\,{\boldsymbol{j}^{[r]}}}, \quad r \in \NN.
\end{equation}
Moreover, if the mask entries $a_j(\alpha)>0$, $\alpha \in \operatorname{supp}(a_j)$, $j=1, \ldots,J$, then $\operatorname{supp}(\phi_{\boldsymbol{j}^{[r]}}) = K_{A,\,{\boldsymbol{j}^{[r]}}}$, $r \in \NN$.
\end{lemma}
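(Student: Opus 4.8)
The plan is to exploit the recursive structure of the refinement equations \eqref{equ_refinement_equation_1} together with the self-similar structure of the sets $K_{A,\,\boldsymbol{j}^{[r]}}$, proving the inclusion by an induction/limit argument on the subdivision recursion. First I would record the \emph{one-step self-similarity} of the sets in \eqref{def:KA}: factoring out $M_{j_r}^{-1}$ from the series one gets
\begin{equation*}
K_{A,\,\boldsymbol{j}^{[r]}} = M_{j_r}^{-1}\Bigl(\operatorname{supp}(a_{j_r}) + K_{A,\,\boldsymbol{j}^{[r+1]}}\Bigr),
\end{equation*}
equivalently $M_{j_r}\,K_{A,\,\boldsymbol{j}^{[r]}} - \alpha \subseteq K_{A,\,\boldsymbol{j}^{[r+1]}}$ for every $\alpha\in\operatorname{supp}(a_{j_r})$. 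This is the geometric counterpart of \eqref{equ_refinement_equation_1} and is the key identity that makes the support inclusion propagate through the refinement relation.

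Next I would prove \eqref{equ_support_1} directly from the limit definition \eqref{equ_basic_limit_function} rather than from \eqref{equ_refinement_equation_1}. Writing $\phi_{\boldsymbol{j}^{[r]}} = \lim_{n\to\infty} S_{j_{r+n}}\cdots S_{j_r}\delta$ and using the re-parametrization induced by the sets $K_{A,\,\boldsymbol{j}^{[r]}}$ (as indicated in the paragraph preceding the statement), one shows inductively that the sequence $S_{j_{r+n}}\cdots S_{j_r}\delta$, viewed at the appropriate scale $\prod_{i=r}^{r+n} M_{j_i}^{-1}$, is supported in the finite partial sum $\sum_{\ell=r}^{r+n}\bigl(\prod_{i=r}^{\ell} M_{j_i}^{-1}\bigr)\operatorname{supp}(a_{j_\ell})$. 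Each application of $S_{j_\ell}$ enlarges the support by at most $\operatorname{supp}(a_{j_\ell})$ at the current scale, which is precisely the content of \eqref{equ_subdivision_opertor}; passing to the limit $n\to\infty$ and taking closure yields $\operatorname{supp}(\phi_{\boldsymbol{j}^{[r]}}) \subseteq K_{A,\,\boldsymbol{j}^{[r]}}$. The compactness of $K_{A,\,\boldsymbol{j}^{[r]}}$, needed to make "support" meaningful, follows exactly as in Lemma~\ref{thm_attractor}, since $\operatorname{supp}(a_j)$ is finite for each $j$ and the $M_j$ are jointly expanding.

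For the converse under the positivity hypothesis $a_j(\alpha)>0$ on $\operatorname{supp}(a_j)$, the point is that no cancellation can occur: if $x\in K_{A,\,\boldsymbol{j}^{[r]}}$, then by the self-similarity identity $x = M_{j_r}^{-1}(\alpha + y)$ for some $\alpha\in\operatorname{supp}(a_{j_r})$ and $y\in K_{A,\,\boldsymbol{j}^{[r+1]}}$, and the refinement equation \eqref{equ_refinement_equation_1} gives $\phi_{\boldsymbol{j}^{[r]}}(x) = \sum_{\alpha'} a_{j_r}(\alpha')\,\phi_{\boldsymbol{j}^{[r+1]}}(M_{j_r}x - \alpha')$ with all coefficients $a_{j_r}(\alpha')$ on the support strictly positive; since $\phi_{\boldsymbol{j}^{[r+1]}}\ge 0$ (basic limit functions of schemes with non-negative masks are non-negative, being uniform limits of non-negative data) and, by an iterated version of this argument together with the density of the dyadic-type points $\prod M_{j_i}^{-1}\alpha_i$ in $K_{A,\,\boldsymbol{j}^{[r]}}$, $\phi_{\boldsymbol{j}^{[r+1]}}$ is strictly positive somewhere in every neighborhood of $M_{j_r}x-\alpha$, one concludes $x\in\operatorname{supp}(\phi_{\boldsymbol{j}^{[r]}})$. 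Combined with \eqref{equ_support_1} this gives equality.

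The main obstacle I anticipate is the converse inclusion: controlling the interplay between the varying dilation matrices $M_{j_r}$ at different levels and propagating strict positivity through infinitely many refinement steps without a stationary self-map to iterate. One must argue that the "cylinder sets" $\bigl(\prod_{i=r}^{\ell} M_{j_i}^{-1}\bigr)\bigl(\operatorname{supp}(a_{j_\ell}) + K_{A,\,\boldsymbol{j}^{[\ell+1]}}\bigr)$ form a nested family whose intersection is exactly $K_{A,\,\boldsymbol{j}^{[r]}}$ and has shrinking diameter — this uses $\rho(\{M_j^{-1}\})<1$ in the same way as Lemma~\ref{thm_attractor} — and then combine this with the contraction-of-supports under $S_{j_\ell}$ to transfer positivity; the forward inclusion \eqref{equ_support_1}, by contrast, is essentially bookkeeping on supports and should be routine.
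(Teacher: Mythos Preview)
The paper does not give a proof of this lemma: it is stated as a ``straightforward observation'' following from the fact that the sets $K_{A,\,\boldsymbol{j}^{[r]}}$ determine the re-parametrization for the subdivision sequences approximating $\phi_{\boldsymbol{j}^{[r]}}$, with a pointer to the non-stationary case in~\cite{CCRomani}. So there is nothing to compare against beyond that one-line justification.

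Your plan is correct and supplies exactly the details the paper suppresses. The forward inclusion via support-tracking through~\eqref{equ_subdivision_opertor} and passage to the limit is indeed what the paper means by ``re-parametrization'': the support of $S_{j_{r+n}}\cdots S_{j_r}\delta$ is contained in the partial Minkowski sum, and closure under the limit gives~\eqref{equ_support_1}. The one-step self-similarity identity you record is the right organizing principle and is implicit in the paper's setup. For the converse under positivity, your argument (non-negativity of $\phi_{\boldsymbol{j}^{[r]}}$ as a uniform limit of non-negative data, plus no cancellation in~\eqref{equ_refinement_equation_1}, plus density of the finite-level address points) is the standard route; the only point to be careful about is that the level-dependent dilations $M_{j_\ell}$ force you to work with the whole family $(\phi_{\boldsymbol{j}^{[r]}})_{r\in\NN}$ simultaneously rather than a single fixed point, but the joint-expanding hypothesis $\rho(\{M_j^{-1}\})<1$ gives uniform contraction of the cylinder sets exactly as you note, and your sketch handles this correctly.
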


\noindent Example~\ref{ex_supp_of_blf}  shows that, for different subdivision schemes
in $\mathcal{S}^\NN$, the supports of the corresponding basic limit functions may have a
completely different structure. Similar observation has been already made in the context of non-stationary schemes in e.g.~\cite{CCGP16a}.

\begin{example}\label{ex_supp_of_blf} We consider
the set $\mathcal{S}=\{S_j=(a_j,M_j)\;:\; j=1,2\}$ from~\cite[section~4]{Cotronei2015} with the dilation matrices
$$
 M_1=\left(\begin{array}{rr}1 &1\\ 1& -2\end{array}\right), \quad
 M_2=\left(\begin{array}{rr}2 &-1\\ 1& -2\end{array}\right),
$$
and the masks
$$
a_j(0,-2)=a_j(0,2)=\frac{1}{3}, \quad  a_j(0,-1)=a_j(0,1)=\frac{2}{3}, \quad a_j(0,0)=1, \quad j=1,2.
$$
The matrices $M_1$ and $M_2$ are jointly expanding, due to $\|M_{j_1}^{-1}M_{j_2}^{-1}\|_2<1$
for all $j_1, j_2 \in \{1,2\}$. The supports of the basic limit functions
$$
 \phi_1=\lim_{n \rightarrow \infty} \prod_{\ell=1}^n (S_2^{2\ell} S_1) \delta, \quad
 \phi_2=\lim_{n \rightarrow \infty} \prod_{\ell=2}^n (S_2^{2\ell} S_1) S_2 S_2 \delta \quad
\text{and} \quad 
 \phi_3=\lim_{n \rightarrow \infty} \prod_{\ell=2}^n (S_2^{2\ell} S_1) S_2 \delta
$$
are given in Figure~\ref{fig_supp_of_blf}. The Matlab code to produce the figures is
\begin{verbatim}
S=getS('2_ex_CGRS');
blf({[1 2 2 1 2 2 2 2 1 2 2 2 2 2 2 1],[2]},S,'iterations',9)
blf({[  2 2 1 2 2 2 2 1 2 2 2 2 2 2 1],[2]},S,'iterations',9)
blf({[    2 1 2 2 2 2 1 2 2 2 2 2 2 1],[2]},S,'iterations',9)
axis equal; axis([-2.8 2.9 -3.2 3.2]); 
\end{verbatim}
\end{example}

\begin{figure}
	\centering
	\includegraphics[width=.7\textwidth,height=5cm,keepaspectratio]{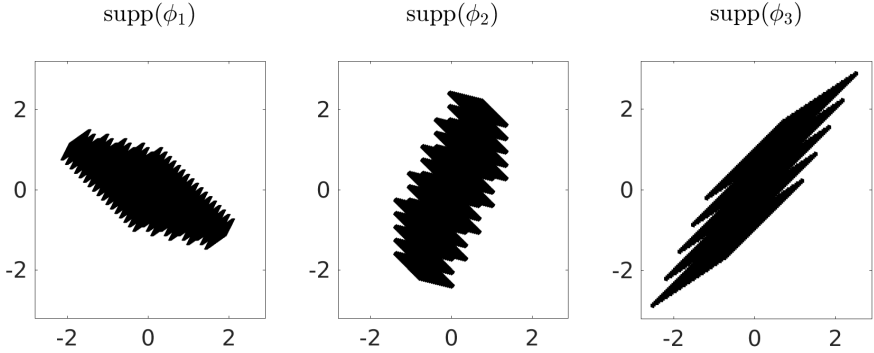} \\	
	\caption{Supports of the basic limit functions from Example~\ref{ex_supp_of_blf}.}
	\label{fig_supp_of_blf}
	%
	%
\end{figure}

\section{Transition matrices and matrix refinement} \label{sec_transition_matrices_supports}

\noindent In section~\ref{sec_transition_matrices_supports}, we construct the transition matrices in the setting of multiple multivariate subdivision. In particular, in Lemma \ref{alg_invariantomega}, we provide an algorithm for the construction of the minimal, invariant subspace  of the transition operators from Definition \ref{def_transition_operator}. The corresponding finite set $\Omega_C$ leads to transition matrices
of minimal size and, thus, is more suitable for computations in section \ref{sec:examples}. Furthermore, Lemma \ref{thm_invariantomega} together with Lemma \ref{lem:Support_OmegaZ_Attractor} guarantees the existence of a matrix vector form of the refinement equation~\eqref{equ_refinement_equation_1}. This explains the special role of the
finite set $\Omega_\ZZ$ constructed in Lemma \ref{thm_invariantomega}. The set $\Omega_\ZZ$ would be also suitable
for defining the transition matrices, but it cannot always be computed and would lead to transition matrices of a larger size.

\noindent Similarly to the stationary and
non-stationary settings, there are two important ingredients of our
construction: the transition operators and their common finite dimensional invariant subspaces.

\begin{definition}\label{def_transition_operator} Let $j \in \{1, \ldots, J\}$. For the subdivision mask $a_j \in \ell_0(\ZZ^s)$ and the dilation matrix $M_j \in \ZZ^{s \times s}$ with the digit set
$D_j\simeq \ZZ^s/M_j\ZZ^s$,  we define the \emph{transition operator} $\mathcal{T}_{d,j}: \ell(\ZZ^s)\rightarrow\ell(\ZZ^s)$ by
\begin{equation}\label{equ_transition_operator}
 (\mathcal{T}_{d,j} c)(\alpha)=\sum_{\beta\in\ZZ^s} a_j(M_j\alpha-\beta+d) \;c(\beta), \quad d\in D_j, \quad c \in\ell(\ZZ^s),\quad \alpha\in\ZZ^s.
\end{equation}
The set of all transition operators is denoted by
\begin{equation*}
 \mathcal{T}=\{\mathcal{T}_{d,j} \;:\; d \in D_j, \; j=1,\ldots,J\}.
\end{equation*}
\end{definition}

\noindent The result of Lemma~\ref{thm_invariantomega} ensures the existence of a common finite dimensional invariant subspace of the transition operators. 

\begin{definition}
Let $\Omega\subset\ZZ^s$. The set $\ell(\Omega)=\{c\in\ell(\ZZ^s) : \operatorname{supp} c \subseteq\Omega\}$ is the set of all sequences $c\in\ell(\ZZ^s)$ supported on $\Omega$.
\end{definition}

\begin{lemma}\label{thm_invariantomega}
There exists a finite set $\Omega_\ZZ \subset\ZZ^s$ such that  $\ell(\Omega_\ZZ)$ is invariant under all operators in $\mathcal{T}$.
\end{lemma}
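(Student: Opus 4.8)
The plan is to follow the support of a sequence through the transition operators and then invoke the jointly expanding hypothesis, exactly in the spirit of the proof of Lemma~\ref{thm_attractor}. First I would record how $\mathcal{T}_{d,j}$ acts on supports: if $c\in\ell(\Omega)$ for some $\Omega\subset\ZZ^s$, then by \eqref{equ_transition_operator} the value $(\mathcal{T}_{d,j}c)(\alpha)$ can be non-zero only if there exist $\beta\in\Omega$ and $\gamma\in\operatorname{supp}(a_j)$ with $M_j\alpha=\gamma+\beta-d$. Setting $L_j:=\operatorname{supp}(a_j)-D_j\subset\ZZ^s$, which is a finite set because both $\operatorname{supp}(a_j)$ and $D_j$ are finite, this yields the inclusion
\[
  \operatorname{supp}(\mathcal{T}_{d,j}c)\ \subseteq\ M_j^{-1}\bigl(L_j+\Omega\bigr)\cap\ZZ^s,\qquad d\in D_j,\ j=1,\ldots,J.
\]
Hence it suffices to exhibit one finite set $\Omega_\ZZ\subset\ZZ^s$ with $M_j^{-1}(L_j+\Omega_\ZZ)\cap\ZZ^s\subseteq\Omega_\ZZ$ for every $j$.

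Such a set is produced by the same norm argument used for the attractors. By \cite[Proposition~1]{Rota60} the joint expansiveness of $\{M_j\}$ gives a vector norm $\|\cdot\|$ on $\RR^s$ whose induced operator norm satisfies $C_1:=\max_{j}\|M_j^{-1}\|<1$; put $C_2:=\max_{j}\max_{x\in L_j}\|x\|<\infty$ and $R:=C_1C_2/(1-C_1)$. For the closed ball $B_R=\{x\in\RR^s:\|x\|\le R\}$ and any $\ell\in L_j$, $\omega\in B_R$ one has $\|M_j^{-1}(\ell+\omega)\|\le C_1(C_2+R)=R$, so $M_j^{-1}(L_j+B_R)\subseteq B_R$ for all $j$. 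Taking $\Omega_\ZZ:=B_R\cap\ZZ^s$ — a finite set, since $B_R$ is bounded — gives $M_j^{-1}(L_j+\Omega_\ZZ)\cap\ZZ^s\subseteq B_R\cap\ZZ^s=\Omega_\ZZ$, and together with the support inclusion above this shows $\ell(\Omega_\ZZ)$ is invariant under every $\mathcal{T}_{d,j}\in\mathcal{T}$.

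I do not expect any real obstacle here: the only delicate point is the uniform choice of norm, which is precisely the Rota--Strang construction already invoked for $K_{D,\boldsymbol{j}}$, and one must check that intersecting with $\ZZ^s$ causes no loss — it does not, because $M_j^{-1}(L_j+B_R)\subseteq B_R$ already holds before intersecting. One could equally well take $\Omega_\ZZ$ to be the set of integer points in a neighbourhood of the union of the attractors $K_{A,\boldsymbol{j}^{[r]}}$ from \eqref{def:KA}, but the ball estimate is shorter and makes finiteness transparent. Finally, since $0\in B_R$ we get $0\in\Omega_\ZZ$, and a genuinely \emph{minimal} invariant $\Omega$ can afterwards be obtained by iterating the support map from the singleton $\{0\}$, which is what the subsequent Lemma~\ref{alg_invariantomega} does.
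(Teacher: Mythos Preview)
Your proof is correct but follows a different route from the paper. The paper invokes Hutchinson's iterated-function-system theory: the maps $\mathcal{M}_j(X)=M_j^{-1}(\operatorname{supp}a_j+X-D_j)$ are contractions on the space of compacta with the Hausdorff metric, so there is a unique compact attractor $\Omega_\RR=\bigcup_j\mathcal{M}_j(\Omega_\RR)$, and the paper sets $\Omega_\ZZ=\Omega_\RR\cap\ZZ^s$. You instead bypass the fixed-point argument entirely and produce by hand an invariant ball $B_R$, then intersect with $\ZZ^s$.

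Your argument is more elementary and self-contained --- it is essentially the same ball estimate the paper itself uses later in Proposition~\ref{thm_VkequVbark_existence} to build $\Omega_V$. What the paper's approach buys is minimality: the attractor $\Omega_\RR$ is the \emph{smallest} compact set invariant under all $\mathcal{M}_j$, hence $\Omega_\RR\subseteq B_R$ and the paper's $\Omega_\ZZ$ is contained in yours. This matters downstream: the proof of Lemma~\ref{lem:Support_OmegaZ_Attractor} exploits the specific attractor structure of $\Omega_\RR$ (as the closure of fixed points of finite compositions of the $\mathcal{M}_j$) to show $K_{A,\boldsymbol{j}}-K_{D,\boldsymbol{j}}\subseteq\Omega_\RR$. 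Your $B_R$ would still work there, since any compact forward-invariant set contains the attractor, but the argument would need to be rephrased. For the lemma as stated, though, your direct estimate is cleaner and entirely sufficient.
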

\begin{proof}
By~\cite[Proposition~1]{Rota60} and due to the assumption that the dilation matrices are jointly expanding, there exists a matrix norm $\|\cdot\|$ such that the inverses of $M_j$ are contractive on $\RR^s$
w.r.t.\ this norm.
Let $\mathcal{X}$ be the set of all non-empty, compact subsets of $\RR^s$. By~\cite[2.10.21]{Federer}, the space $(\mathcal{X},d_H)$,
where $d_H$ is the Hausdorff metric w.r.t $\|\cdot\|$, is a complete metric space. The mappings
\begin{equation} \label{eq:def_contractiveMj}
\mathcal{M}_j: \mathcal{X} \rightarrow \mathcal{X}, \quad
\mathcal{M}_j(X)=M_j^{-1}(\operatorname{supp}a_j+X-D_j),\quad X\in \mathcal{X}, \quad
j=1,\ldots,J,
\end{equation}
are contractive. By the results in~\cite[section~3.1]{Hut81}, there exists a unique $\Omega_\RR\in \mathcal{X}$ such that
\begin{equation}\label{equ_one_invariant_vector_3}
 \Omega_\RR=\bigcup_{j=1,\ldots,J} \mathcal{M}_{j}(\Omega_\RR).
\end{equation}
Define
$$
 \Omega_\ZZ=\Omega_\RR\cap\ZZ^s.
$$
Let $d \in D_j$ for $j \in\{1,\ldots,J\}$. We show that $\mathcal{T}_{d,j}:\ell(\Omega_\ZZ)\rightarrow\ell(\Omega_\ZZ)$. Indeed, if
$v \in \ell(\Omega_\ZZ)$,
then by~\eqref{equ_transition_operator}
$ \displaystyle \operatorname{supp}(\mathcal{T}_{d,j} v)\subseteq
\bigcup_{j} M^{-1}_j(\operatorname{supp}a_j+\Omega_\ZZ-D_j)\subseteq
\bigg(\bigcup_{j} M^{-1}_j(\operatorname{supp}a_j+\Omega_\RR-D_j)\bigg) \cap \ZZ^s= \Omega_\ZZ
$.
\end{proof}

\noindent The result of Lemma~\ref{thm_invariantomega} allows us to associate each transition operator
$\mathcal{T}_{d,j}$ with a certain  square matrix.

\begin{definition} \label{def:transition_matrices} Let $\Omega \subset \ZZ^s$ be finite and such that $\ell(\Omega)$ is $\mathcal{T}$ invariant. For the operators in $\mathcal{T}$ we define the \emph{transition matrices}
\begin{equation*}
T_{d,j,\Omega}=\big(a_j(M_j\alpha-\beta+d)\big)_{\alpha,\beta\in\Omega}, \quad d \in D_j, \quad j=1, \ldots,J.
\end{equation*}
We note that $\alpha$ and $\beta$ are the respective row and column indices.
\end{definition}

\begin{remark} In the rest of the paper we use two other sets $\Omega \subset \ZZ^s$ such that $\ell(\Omega)$ is
invariant under all  operators in $\mathcal{T}$: the set $\Omega_C$ from Lemma~\ref{alg_invariantomega} for numerical computation in
section~\ref{sec:examples}; and the larger set $\Omega_V$ from Proposition~\ref{thm_VkequVbark_existence} for the
theoretical analysis in section~\ref{sec:JSRvRSR}.
\end{remark}

\noindent The sum rules of order one for the masks $a_j$, Assumption~\textbf{S}, become conditions on the spectral properties of the
transition matrices.

\begin{lemma}\label{thm_invariantone} Let $\mathcal{S}$ be a finite set of subdivision operators whose masks satisfy
Assumption~$\operatorname{\hbox{\bf S}}$ and $\mathcal{T}_{d,j} \in \mathcal{T}$.
\begin{description}
\item[$(i)$] If $\mathcal{T}_{d,j}:\ell(\Omega)
\rightarrow\ell(\Omega)$, $\Omega \subset \ZZ^s$, then the transition matrix $T_{d,j,\Omega}$ satisfies $(1,\ldots,1)\, T_{d,j,\Omega}=(1,\ldots,1)$.
\item[$(ii)$] If all entries of $T_{d,j,\Omega}$ are non-negative, then $(1,\ldots,1)\, T_{d,j,\Omega}=(1,\ldots,1)$ implies that
 $\mathcal{T}_{d,j}:\ell(\Omega)\rightarrow\ell(\Omega)$.
\end{description}
\end{lemma}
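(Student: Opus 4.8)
The plan is to reduce both parts to one bookkeeping identity. Reading off the column indexed by $\beta\in\Omega$, the row vector $(1,\ldots,1)\,T_{d,j,\Omega}$ has $\beta$-th entry $\sum_{\alpha\in\Omega} a_j(M_j\alpha-\beta+d)$, so the asserted equality $(1,\ldots,1)\,T_{d,j,\Omega}=(1,\ldots,1)$ is precisely the statement that $\sum_{\alpha\in\Omega} a_j(M_j\alpha-\beta+d)=1$ for every $\beta\in\Omega$. On the other hand, Assumption~\textbf{S} controls the \emph{full} sum: substituting the free parameter $\alpha\mapsto d-\beta\in\ZZ^s$ in the sum rule~\eqref{equ_sum_rule_zero} yields $\sum_{\alpha\in\ZZ^s} a_j(M_j\alpha-\beta+d)=1$ for every $\beta\in\ZZ^s$. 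Hence, in both directions, the whole question is whether the ``tail'' $\sum_{\alpha\in\ZZ^s\setminus\Omega} a_j(M_j\alpha-\beta+d)$ vanishes for $\beta\in\Omega$.

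For part~$(i)$ I would test the hypothesis $\mathcal{T}_{d,j}:\ell(\Omega)\rightarrow\ell(\Omega)$ on the unit sequences $\delta(\cdot-\beta)$, $\beta\in\Omega$. By~\eqref{equ_transition_operator} one has $\big(\mathcal{T}_{d,j}\,\delta(\cdot-\beta)\big)(\alpha)=a_j(M_j\alpha-\beta+d)$, and since $\delta(\cdot-\beta)\in\ell(\Omega)$, invariance forces $\operatorname{supp}\big(\mathcal{T}_{d,j}\,\delta(\cdot-\beta)\big)\subseteq\Omega$, i.e.\ $a_j(M_j\alpha-\beta+d)=0$ for all $\alpha\in\ZZ^s\setminus\Omega$. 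Thus the $\Omega$-sum equals the $\ZZ^s$-sum, which is $1$, giving $(i)$. For part~$(ii)$ I would run the same identity backwards: subtracting the hypothesis $\sum_{\alpha\in\Omega} a_j(M_j\alpha-\beta+d)=1$ from $\sum_{\alpha\in\ZZ^s} a_j(M_j\alpha-\beta+d)=1$ leaves $\sum_{\alpha\in\ZZ^s\setminus\Omega} a_j(M_j\alpha-\beta+d)=0$ for $\beta\in\Omega$; by non-negativity every summand must vanish, so again $a_j(M_j\alpha-\beta+d)=0$ whenever $\alpha\notin\Omega$ and $\beta\in\Omega$. Then for $v\in\ell(\Omega)$ and $\alpha\notin\Omega$ we get $(\mathcal{T}_{d,j}v)(\alpha)=\sum_{\beta\in\Omega} a_j(M_j\alpha-\beta+d)\,v(\beta)=0$, hence $\mathcal{T}_{d,j}:\ell(\Omega)\rightarrow\ell(\Omega)$.

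I expect no serious obstacle; the one point that needs care is the cancellation step in $(ii)$, where the non-negativity has to be used for the mask values $a_j(M_j\alpha-\beta+d)$ with $\beta\in\Omega$ and \emph{arbitrary} $\alpha\in\ZZ^s$, not merely $\alpha\in\Omega$ -- i.e.\ it is cleanest to invoke it under the standing assumption that the mask $a_j$ itself is non-negative, which is exactly the relevant case for the computations in section~\ref{sec:examples}. The asymmetry between the two directions is real: in $(i)$ the support/invariance hypothesis annihilates the tail for free, whereas in $(ii)$ that information is unavailable and must be recovered from positivity, which is why the extra non-negativity assumption is needed only there.
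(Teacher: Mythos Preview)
Your proof is correct and follows essentially the same route as the paper: both directions hinge on showing $a_j(M_j\alpha-\beta+d)=0$ for $\alpha\notin\Omega$, $\beta\in\Omega$, and then comparing the $\Omega$-sum with the full $\ZZ^s$-sum via Assumption~\textbf{S}. Your remark about $(ii)$ is well taken---the paper's own proof also tacitly upgrades the hypothesis to ``$a_j(\alpha)\geq 0$ for all $\alpha\in\ZZ^s$'', which is exactly what the cancellation step requires.
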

\begin{proof} $(i)$ Invariance of $\ell(\Omega)$ under $\mathcal{T}_{d,j}$, $d \in D_j$, $j \in \{1, \ldots,J\}$, implies,
by Definition~\ref{def_transition_operator}, that $a_j(M_j\alpha-\beta+d)=0$, whenever $\alpha \notin \Omega$ and $\beta\in \Omega$.
By~Assumption~\textbf{S}, we conclude that the entries in each column of the corresponding
transition matrix $\mathcal{T}_{d,j}$ sum up to one, since
\begin{equation*}
1=\sum_{\alpha\in\ZZ^s} a_j(M_j\alpha-\beta+d)=\sum_{\alpha\in\Omega} a_j(M_j\alpha-\beta+d), \quad \beta \in \Omega.
\end{equation*}
$(ii)$ Assume that $a_j(\alpha)\geq 0$ for all $\alpha\in\ZZ^s$, $j \in \{1,\ldots,J\}$. Due to $(1,\ldots,1)\, T_{d,j,\Omega}=(1,\ldots,1)$, $d \in D_j$, we get
\begin{equation*}
1=\sum_{\alpha\in\Omega}a_j(M_j\alpha-\beta+d),\quad \beta\in\Omega.
\end{equation*}
Assumption~\textbf{S}, i.e. the sum rules, implies that $a_j(M_j\alpha-\beta+d)=0$ for all $\alpha\not\in\Omega$, $\beta\in \Omega$.
Thus, $\mathcal{T}_{d,j}:\ell(\Omega)\rightarrow\ell(\Omega)$.
\end{proof}

\begin{remark} \label{rem_invariantomega} 
For the computation of the joint spectral radius in section~\ref{sec:examples}, the approximations (via the fixed point iteration~\cite[section~3.1~$(viii)$]{Hut81}) of $\Omega_\ZZ$ defined in Lemma~\ref{thm_invariantomega} are of no practical use. The following straightforward observation leads to an algorithm (see Lemma~\ref{alg_invariantomega}) for
explicit computation of $\Omega_C \subseteq \Omega_\ZZ$ with desired invariance properties as in Lemma~\ref{thm_invariantone}. Since $0\in\operatorname{supp}(a_j)$ and $0\in D_j$
for all $j=1,\ldots,J$, we conclude from~\cite[section~3.1~$(iii)$]{Hut81} that $0\in\Omega_\ZZ$.
Note that this set $\Omega_C$ is, by construction, the smallest $\mathcal{T}$ invariant set which contains $0$.
\end{remark}

\begin{lemma}\label{alg_invariantomega}  The following algorithm constructs a finite set $\Omega_C \subset \ZZ^s$ such that $\ell(\Omega_C)$ is $\mathcal{T}$ invariant.
\begin{flalign*}
&\Omega_0=\{0\},\ i=0 &\\
&\mathbf{repeat}\\
&\qquad   i=i+1\\
&\qquad   \Omega_i=\Omega_{i-1}\\
&\qquad   \mathbf{for}\ j=1,\ldots,J\ \mathbf{do}\\
&\qquad   \qquad   \Omega_{i,j}=(M_j^{-1}(\operatorname{supp}a_j + \Omega_i - D))\cap\ZZ^s\\
&\qquad   \qquad   \Omega_i=\Omega_i \cup \Omega_{i,j}\\
&\qquad   \mathbf{end}\\
&\mathbf{until}\  \Omega_{i}=\Omega_{i-1}\\
&\Omega_C=\Omega_i
\end{flalign*}

\end{lemma}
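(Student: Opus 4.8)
The plan is to show two things: first, that the algorithm terminates after finitely many steps, producing a finite set $\Omega_C \subset \ZZ^s$; and second, that the output set $\Omega_C$ has the required $\mathcal{T}$ invariance, i.e.\ $\mathcal{T}_{d,j}:\ell(\Omega_C)\to\ell(\Omega_C)$ for every $d\in D_j$, $j=1,\ldots,J$. The key observation tying the algorithm to the earlier theory is that the update rule $\Omega_i \mapsto \Omega_i \cup \bigcup_j \big(M_j^{-1}(\operatorname{supp}a_j+\Omega_i-D)\big)\cap\ZZ^s$ is exactly the integer-lattice trace of one step of the Hutchinson-type iteration for the set map $X\mapsto \bigcup_j \mathcal{M}_j(X)$ from~\eqref{eq:def_contractiveMj}, applied to the seed $\{0\}$; here $D=\bigcup_j D_j$ is used inside the algorithm in place of the individual $D_j$, which only enlarges the sets and is harmless.

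First I would prove termination. Consider the sets $\widehat\Omega_i\subset\RR^s$ obtained by running the \emph{same} iteration but without intersecting with $\ZZ^s$, starting from $\widehat\Omega_0=\{0\}$. Since $0\in\operatorname{supp}(a_j)$ and $0\in D_j$ for all $j$ (Assumption after Definition~\ref{def_subdivision_operator} and the standing choice of digit sets), one checks $\widehat\Omega_{i-1}\subseteq\widehat\Omega_i$, so the $\widehat\Omega_i$ form an increasing chain. Moreover, using the matrix norm $\|\cdot\|$ from~\cite[Proposition~1]{Rota60} with $C_1=\max_j\|M_j^{-1}\|<1$ and the finiteness bound $C_2$ on $\operatorname{supp}(a_j)\cup D$, the same geometric-series estimate as in the proof of Lemma~\ref{thm_attractor} shows every $\widehat\Omega_i$ is contained in the fixed ball $\{x:\|x\|\le C_2C_1/(1-C_1)\}$; equivalently $\widehat\Omega_i$ stays inside the compact attractor $\Omega_\RR$ of~\eqref{equ_one_invariant_vector_3}, so $\Omega_i=\widehat\Omega_i\cap\ZZ^s\subseteq\Omega_\RR\cap\ZZ^s=\Omega_\ZZ$, which is finite by Lemma~\ref{thm_invariantomega}. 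An increasing chain of subsets of a fixed finite set must stabilize, so the \textbf{until} condition $\Omega_i=\Omega_{i-1}$ is met after finitely many iterations, and $\Omega_C\subseteq\Omega_\ZZ$ is finite. (This also recovers the containment $\Omega_C\subseteq\Omega_\ZZ$ claimed in Remark~\ref{rem_invariantomega}.)

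Next I would verify invariance. On termination, $\Omega_C=\Omega_i$ satisfies the fixed-point relation $\Omega_C = \Omega_C \cup \bigcup_{j=1}^J \big(M_j^{-1}(\operatorname{supp}a_j+\Omega_C-D)\big)\cap\ZZ^s$, hence $\big(M_j^{-1}(\operatorname{supp}a_j+\Omega_C-D_j)\big)\cap\ZZ^s\subseteq\Omega_C$ for each $j$, using $D_j\subseteq D$. Now take any $v\in\ell(\Omega_C)$ and $d\in D_j$. By~\eqref{equ_transition_operator}, $(\mathcal{T}_{d,j}v)(\alpha)=\sum_{\beta\in\Omega_C} a_j(M_j\alpha-\beta+d)\,v(\beta)$, which can be nonzero only if $M_j\alpha-\beta+d\in\operatorname{supp}(a_j)$ for some $\beta\in\Omega_C$, i.e.\ $\alpha\in M_j^{-1}(\operatorname{supp}(a_j)-d+\Omega_C)\subseteq M_j^{-1}(\operatorname{supp}(a_j)+\Omega_C-D_j)$; since $\alpha\in\ZZ^s$ automatically, $\alpha\in\big(M_j^{-1}(\operatorname{supp}a_j+\Omega_C-D_j)\big)\cap\ZZ^s\subseteq\Omega_C$. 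Thus $\operatorname{supp}(\mathcal{T}_{d,j}v)\subseteq\Omega_C$, proving $\ell(\Omega_C)$ is $\mathcal{T}$ invariant.

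The main obstacle is the termination argument: one must be careful that the recursion without the $\ZZ^s$ intersection genuinely stays inside a fixed compact set, so that after intersecting with the lattice one gets an increasing chain inside the finite set $\Omega_\ZZ$. The subtlety is that intersecting with $\ZZ^s$ at each step, rather than only at the end, could in principle lose points and break monotonicity of the chain; but since $0$ is a common fixed point of the generating maps (because $0\in\operatorname{supp}(a_j)\cap D_j$), monotonicity of the real iteration $\widehat\Omega_{i-1}\subseteq\widehat\Omega_i$ is preserved after intersecting with $\ZZ^s$, and the uniform compact bound from Lemma~\ref{thm_attractor}'s estimate does the rest. Everything else is a routine unwinding of the definition of $\mathcal{T}_{d,j}$.
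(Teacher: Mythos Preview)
Your proof is correct and follows essentially the same route as the paper: show $\Omega_i\subseteq\Omega_\ZZ$ for all $i$ (so the increasing chain stabilizes inside a finite set), then read off $\mathcal{T}$-invariance from the fixed-point relation at termination. The paper's argument is slightly leaner in that it obtains $\Omega_i\subseteq\Omega_\ZZ$ directly by induction from the invariance of $\Omega_\ZZ$ established in Lemma~\ref{thm_invariantomega}, without introducing the auxiliary real iterates $\widehat\Omega_i$; also note that monotonicity $\Omega_{i-1}\subseteq\Omega_i$ is immediate from the algorithm's structure (each outer loop begins with $\Omega_i=\Omega_{i-1}$ and only takes unions), so the concern in your final paragraph is unnecessary.
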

\begin{proof}
We first prove that the algorithm terminates after finitely many steps. More precisely, we show, by induction on $i$, that the sets
$(\Omega_i)_{i=0}^n$, $n \in \NN$, are increasing, nested subsets of the finite set $\Omega_\ZZ$ determined in
Lemma~\ref{thm_invariantomega}. Thus, $n$ is finite. Indeed, by Remark~\ref{rem_invariantomega},
$\Omega_0=\{0\}\subset\Omega_\ZZ$.
Assume that $\Omega_i\subseteq\Omega_\ZZ$, $i\le n$. By Lemma~\ref{thm_invariantomega}, $\Omega_\ZZ$ is invariant under all operators $M_j^{-1}(\operatorname{supp}(a_j)+\cdot-D)$,
thus, we get $\Omega_{i,j}\subseteq \Omega_\ZZ$ for all  $j=1,\ldots,J$.
Therefore, 
$
\Omega_{i}=\Omega_{i-1} \cup \bigcup_{j} \Omega_{i,j} \subseteq\Omega_\ZZ$.
Due to the stopping criterion, we get increasing, nested sets $\Omega_0\subset \Omega_1\subset  \Omega_2 \subset \cdots \Omega_n \subseteq\Omega_\ZZ$.
Moreover, for $\Omega_C=\Omega_n=\Omega_{n-1}$, due to~\eqref{equ_transition_operator}, we get
 $\mathcal{T}_d:\ell(\Omega_{n-1}) \rightarrow \ell(\Omega_{n})$, $d\in D_j$, $j\in\{1,\ldots,J\}$. Thus, the claim follows.
\end{proof}

\begin{remark}
The choice of $\Omega_0=\{0\}$ in Lemma~\ref{alg_invariantomega} is not crucial. Given any $\Omega_0\subset\ZZ^s$, finite, the algorithm in Lemma~\ref{alg_invariantomega} constructs a set $\Omega_C$ such that $\ell(\Omega_C)$ is $\mathcal{T}$ invariant and $\Omega_0\subseteq\Omega_C$. This follows directly by artificially enlarging the sets  $\operatorname{supp}a_j$ such that $\Omega_0\subseteq\Omega_\RR$, with $\Omega_{\RR}$ from Lemma~\ref{thm_invariantomega}.
\end{remark}

\noindent In some cases, the sets $\Omega_\ZZ$ defined in Lemma~\ref{thm_invariantomega} and $\Omega_C$ constructed in
Lemma~\ref{alg_invariantomega} coincide. 

\begin{example} $(i)$ Let $M=2$, $D=\{0,1\}$ and $\operatorname{supp}a=\{0,1,2\}$. Then
\begin{equation*}
\Omega_\RR= M^{-1}(\operatorname{supp}a-D)+M^{-2}(\operatorname{supp}a-D)+M^{-3}(\operatorname{supp}a-D)+\cdots=[-1,2]
\end{equation*}
Thus, $\Omega_\ZZ=\{-1,0,1,2\}$. The algorithm in Lemma~\ref{alg_invariantomega} generates $\Omega_C=\{0,1\}$.

$(ii)$ Let $M=-2$, $D=\{-1,~0\}$ and
$\operatorname{supp}a=\{0,1,2,3\}$. Then, by~\cite[Proposition~2.7]{CHM},
$\Omega_\RR=K_{A,\,\boldsymbol{j}}-K_{D\!,\,\boldsymbol{j}}=
[-\frac{5}{3},\frac{1}{3}]-[-\frac{1}{3},\frac{2}{3}]=
[-\frac{7}{3},\frac{2}{3}]$. Thus, $\Omega_\ZZ=\{-2,-1,0\}$.
The algorithm in Lemma~\ref{alg_invariantomega} produces the same set.
\end{example}

\noindent Similarly to the stationary and non-stationary settings, the supports of the basic limit functions
$(\phi_{\boldsymbol{j}^{[r]}})_{r \in \NN}$ can be covered by the integer shifts of the corresponding attractors $K_{D\!,\,\boldsymbol{j}^{[r]}}$
in Definition~\ref{def_attractor}. This leads to a standard matrix form of the refinement equations used for
analysing the existence and regularity of refinable functions in the stationary and non-stationary settings. 
The results of section~\ref{sec:JSRvRSR} however do not rely on such representations and the remaining part of
this section is merely for a curious reader.

\begin{lemma}\label{lem:Support_OmegaZ_Attractor}
Let $\Omega_\ZZ \subset \ZZ^s$ be as in Lemma~\ref{thm_invariantomega}. Assume that the subdivision scheme
$(S_{j_\ell})_{\ell \in \NN}\in\mathcal{S}^\NN$ is convergent. 
Then $\operatorname{supp}(\phi_{\boldsymbol{j}^{[r]}}) \subseteq \Omega_\ZZ+ K_{D\!,\,\boldsymbol{j}^{[r]}}$
for all $r\in\NN$.
\end{lemma}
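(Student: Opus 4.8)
\emph{Strategy.} Write $\Psi_r:=\Omega_\ZZ+K_{D\!,\,\boldsymbol{j}^{[r]}}$ for $r\in\NN$. Since $\Omega_\ZZ$ is finite and each $K_{D\!,\,\boldsymbol{j}^{[r]}}$ is compact by Lemma~\ref{thm_attractor}, every $\Psi_r$ is compact, hence closed; moreover $0\in\Psi_r$, because $0\in\Omega_\ZZ$ by Remark~\ref{rem_invariantomega} and $0\in D_j$ for all $j$ forces $0\in K_{D\!,\,\boldsymbol{j}^{[r]}}$. The plan is to show that $\Psi_r$ is \emph{almost invariant} under one step of the refinement equation~\eqref{equ_refinement_equation_1} and then to iterate, absorbing the error into an arbitrarily small ball. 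There is no ``base case'' for an induction on $r$ here: the refinement equation relates level $r$ to level $r+1$, so one iterates towards $+\infty$ and controls the tail by the uniform boundedness of the supports.

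\emph{Two structural inclusions.} First I would establish, for each $j=1,\dots,J$,
\[
  \operatorname{supp}(a_j)+\Omega_\ZZ\ \subseteq\ M_j\Omega_\ZZ+D_j .
\]
The $\mathcal T$-invariance of $\ell(\Omega_\ZZ)$ from Lemma~\ref{thm_invariantomega} is equivalent (take $v=\delta_\beta$, as in the proof of Lemma~\ref{thm_invariantone}$(i)$) to $a_j(M_j\alpha-\beta+d)=0$ whenever $\beta\in\Omega_\ZZ$ and $\alpha\notin\Omega_\ZZ$; reading this contrapositively, and using that $D_j$ is a complete set of coset representatives of $\ZZ^s/M_j\ZZ^s$ (so that for given $a\in\operatorname{supp}(a_j)$ and $\omega\in\Omega_\ZZ$ there is a unique $d\in D_j$ with $a+\omega-d\in M_j\ZZ^s$), gives $M_j^{-1}(a+\omega-d)\in\Omega_\ZZ$, which is the displayed inclusion. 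Second, peeling off the first summand in~\eqref{equ_attractor} applied to $\boldsymbol{j}^{[r]}$ yields the self-similarity $K_{D\!,\,\boldsymbol{j}^{[r]}}=M_{j_r}^{-1}\big(D_{j_r}+K_{D\!,\,\boldsymbol{j}^{[r+1]}}\big)$. Combining the two,
\[
  M_{j_r}^{-1}\big(\operatorname{supp}(a_{j_r})+\Psi_{r+1}\big)\ \subseteq\ M_{j_r}^{-1}\big(M_{j_r}\Omega_\ZZ+D_{j_r}+K_{D\!,\,\boldsymbol{j}^{[r+1]}}\big)\ =\ \Psi_r ,
\]
and, since Minkowski addition is monotone and each $M_j^{-1}$ distributes over it, iterating $n$ times gives
\[
  M_{j_r}^{-1}\Big(\operatorname{supp}(a_{j_r})+\cdots+M_{j_{r+n-1}}^{-1}\big(\operatorname{supp}(a_{j_{r+n-1}})+\Psi_{r+n}\big)\cdots\Big)\ \subseteq\ \Psi_r .
\]

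\emph{Refinement iteration and passage to the limit.} By Lemma~\ref{thm_blf_properties} and finiteness of $\operatorname{supp}(a_{j_r})$, taking supports in~\eqref{equ_refinement_equation_1} gives $\operatorname{supp}(\phi_{\boldsymbol{j}^{[r]}})\subseteq M_{j_r}^{-1}\big(\operatorname{supp}(a_{j_r})+\operatorname{supp}(\phi_{\boldsymbol{j}^{[r+1]}})\big)$, hence
\[
  \operatorname{supp}(\phi_{\boldsymbol{j}^{[r]}})\ \subseteq\ M_{j_r}^{-1}\Big(\operatorname{supp}(a_{j_r})+\cdots+M_{j_{r+n-1}}^{-1}\big(\operatorname{supp}(a_{j_{r+n-1}})+\operatorname{supp}(\phi_{\boldsymbol{j}^{[r+n]}})\big)\cdots\Big) .
\]
By Lemma~\ref{thm_supp_refinement_equation} together with the boundedness argument in the proof of Lemma~\ref{thm_attractor} applied to $K_{A,\,\boldsymbol{j}^{[r+n]}}$ (uniform in $r$ and in $\boldsymbol{j}$), there is $R>0$, independent of $r$, with $\operatorname{supp}(\phi_{\boldsymbol{j}^{[r+n]}})\subseteq K_{A,\,\boldsymbol{j}^{[r+n]}}\subseteq\{x:\|x\|\le R\}$, where $\|\cdot\|$ is the contractive norm from~\cite[Proposition~1]{Rota60} with $C_1:=\max_j\|M_j^{-1}\|<1$. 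Since $0\in\Psi_{r+n}$, this gives $\operatorname{supp}(\phi_{\boldsymbol{j}^{[r+n]}})\subseteq\Psi_{r+n}+\{x:\|x\|\le R\}$. Substituting this into the previous inclusion, distributing the ball out through the $n$ factors $M_{j_i}^{-1}$, and invoking the iterated almost-invariance yields, for every $n\in\NN$,
\[
  \operatorname{supp}(\phi_{\boldsymbol{j}^{[r]}})\ \subseteq\ \Psi_r+\Big(\prod_{i=r}^{r+n-1}M_{j_i}^{-1}\Big)\{x:\|x\|\le R\}\ \subseteq\ \Psi_r+\{x:\|x\|\le C_1^{\,n}R\} .
\]
Letting $n\to\infty$, and using $C_1<1$ and that $\Psi_r$ is closed, gives $\operatorname{supp}(\phi_{\boldsymbol{j}^{[r]}})\subseteq\Psi_r=\Omega_\ZZ+K_{D\!,\,\boldsymbol{j}^{[r]}}$, which is the claim.

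\emph{Main obstacle.} The delicate point is the first displayed inclusion: it has to hold for the integer set $\Omega_\ZZ$ itself rather than for its real envelope $\Omega_\RR$ (the statement concerns $\Omega_\ZZ$, and $\Omega_\RR+K_{D\!,\,\boldsymbol{j}^{[r]}}$ would in general be strictly larger), which is why one must use the complete-set-of-representatives property of $D_j$ instead of the contractivity of the maps $\mathcal M_j$ exploited in Lemma~\ref{thm_invariantomega}. The remaining care is routine: passing from $\{\phi_{\boldsymbol{j}^{[r]}}\neq0\}$ to its closure through the finite sum in~\eqref{equ_refinement_equation_1}, and checking that the radius $R$ is genuinely independent of $r$ and of the index sequence $\boldsymbol{j}$.
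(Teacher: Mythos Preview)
Your proof is correct, and it follows a genuinely different route from the paper's.

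The paper argues via $\Omega_\RR$: it first shows $K_{A,\boldsymbol{j}}-K_{D,\boldsymbol{j}}\subseteq\Omega_\RR$ by invoking Hutchinson's description of $\Omega_\RR$ as the closure of fixed points of compositions of the maps $\mathcal{M}_j$, and then uses the covering $\RR^s=K_{D,\boldsymbol{j}}+\ZZ^s$ from~\cite{Groe2006} to pick, for each $x\in\operatorname{supp}(\phi_{\boldsymbol{j}})$, an integer $\alpha$ with $x-\alpha\in K_{D,\boldsymbol{j}}$ and conclude $\alpha\in\Omega_\RR\cap\ZZ^s=\Omega_\ZZ$. Your argument bypasses both ingredients: you work directly with $\Omega_\ZZ$, extract the inclusion $\operatorname{supp}(a_j)+\Omega_\ZZ\subseteq M_j\Omega_\ZZ+D_j$ from the $\mathcal{T}$-invariance of $\ell(\Omega_\ZZ)$ together with the fact that $D_j$ is a full set of coset representatives, combine it with the one-step self-similarity of the attractors, and then iterate the refinement equation, controlling the tail by the uniform bound on $K_{A,\boldsymbol{j}^{[r+n]}}$ and the joint contractivity of the $M_j^{-1}$.

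What each approach buys: the paper's proof is shorter once one is willing to quote~\cite{Hut81} and~\cite{Groe2006}, but it genuinely uses the specific definition $\Omega_\ZZ=\Omega_\RR\cap\ZZ^s$. Your argument, by contrast, uses only that $\ell(\Omega_\ZZ)$ is $\mathcal{T}$-invariant and that $0\in\Omega_\ZZ$; in particular, it goes through verbatim with $\Omega_\ZZ$ replaced by the set $\Omega_C$ of Lemma~\ref{alg_invariantomega} (and with a trivial shift by a fixed $\omega\in\Omega$ it works for \emph{any} finite $\Omega$ with $\ell(\Omega)$ invariant under $\mathcal{T}$). In other words, your method actually proves Conjecture~\ref{lemma:supp_phi}, which the paper leaves open.
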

\begin{proof}
Without loss of generality we assume $r=1$, i.e.\ $\boldsymbol{j}=\boldsymbol{j}^{[1]}$. Recall, from Lemma~\ref{thm_invariantomega}, that $\Omega_\RR \subset \RR^s$ is compact and
is the unique solution of the fixed point equation in~\eqref{equ_one_invariant_vector_3}.
We show first that $K_{A,\,\boldsymbol{j}}-K_{D\!,\,\boldsymbol{j}} \subseteq \Omega_\RR$ with
$K_{A,\,\boldsymbol{j}}$ in~\eqref{def:KA}, $K_{D\!,\,\boldsymbol{j}}$ in~\eqref{equ_attractor} and $\boldsymbol{j}=(j_\ell)_{\ell \in \NN}$. By the results
in~\cite[section~3.1]{Hut81}, the set $\Omega_\RR$ is also the closure (in the Hausdorff metric) of the fixed points of the compositions of the contractive mappings
${\cal M}_{r_i}$ defined in~\eqref{eq:def_contractiveMj}. More precisely,
\begin{equation*}
 \Omega_\RR= 
\operatorname{clos} \left\{ 
\Omega_{\ell_1,\ldots, \ell_k}  \in\mathcal{X} \;:\; 
\Omega_{\ell_1,\ldots, \ell_k} = 
{\cal M}_{\ell_1} \circ \ldots \circ {\cal M}_{\ell_k}(\Omega_{\ell_1,\ldots, \ell_k}), 
 \ \ell_i \in \{1, \ldots,J\}, \  
 i=1,\ldots,k, \ k \in \NN 
\, \right\}
\end{equation*}
and $\displaystyle \lim_{k \rightarrow \infty} \Omega_{\ell_1,\ldots, \ell_k}$ exist and belong to $\Omega_\RR$.
Thus,  for the specific ordering in $\boldsymbol{j}$, by~\eqref{eq:def_contractiveMj} and due to
\begin{align*}
  \Omega_\RR \supseteq {\cal M}_{j_1} \circ \ldots \circ {\cal M}_{j_k}(\Omega_{j_1,\ldots, j_k})
	 = M_{j_1}^{-1} \operatorname{supp}a_{j_1} + M_{j_1}^{-1} M_{j_2}^{-1} \operatorname{supp}a_{j_2}+ \ldots + M_{j_1}^{-1} \ldots M_{j_k}^{-1}
	 \operatorname{supp}a_{j_k} +&\\
   + M_{j_1}^{-1} \ldots M_{j_k}^{-1}\Omega_{j_1,\ldots, j_k}  
   - M_{j_1}^{-1} D_{\!j_1} - M_{j_1}^{-1} M_{j_2}^{-1} D_{\!j_2}+ \ldots - M_{j_1}^{-1} \ldots M_{j_k}^{-1} D_{\!j_k},&
\end{align*}
we get $K_{A,\,\boldsymbol{j}}-K_{D\!,\,\boldsymbol{j}} \subseteq \Omega_\RR$. Now we are ready to prove the claim. By~\cite[Lemma~1]{Groe2006},
$\RR^s=K_{D\!,\,\boldsymbol{j}}+\ZZ^s$. Thus, for $x\in\operatorname{supp}(\phi_{\boldsymbol{j}})=K_{A,\,\boldsymbol{j}}$, there exists $\alpha\in\ZZ^s$ such that
$x\in K_{D\!,\,\boldsymbol{j}}+\alpha$. Therefore, $\alpha\in x-K_{D\!,\,\boldsymbol{j}}\subseteq K_{A,\,\boldsymbol{j}}-K_{D\!,\,\boldsymbol{j}}\subseteq\Omega_\RR$,
i.e.\ $\alpha \in \Omega_\ZZ=\Omega_\RR \cap \ZZ^s$. This implies that $x \in K_{D\!,\,\boldsymbol{j}}+\Omega_\ZZ$.
\end{proof}

\noindent Lemma~\ref{lem:Support_OmegaZ_Attractor} generalizes the result~\cite[Proposition~2.7]{CHM}. We conjecture that the result of Lemma~\ref{lem:Support_OmegaZ_Attractor} is true
for an arbitrary finite $\Omega \subset \ZZ^s$, such that $\ell(\Omega)$ is $\mathcal{T}$ invariant, e.g. the
set $\Omega_C$ from Lemma~\ref{alg_invariantomega}.

\begin{conjecture} \label{lemma:supp_phi} Let $\mathcal{S}^{\NN}$ be convergent and $\Omega\subset\ZZ^s$ be finite and
such that $\ell(\Omega)$ is $\mathcal{T}$ invariant. 
Then $\operatorname{supp}(\phi_{\boldsymbol{j}^{[r]}}) \subseteq \Omega+K_{D\!,\,\boldsymbol{j}^{[r]}}$ for all  
$r\in\NN$.
\end{conjecture}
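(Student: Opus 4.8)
The plan is to mimic the structure of the proof of Lemma~\ref{lem:Support_OmegaZ_Attractor}, but to replace the fixed-point characterization of $\Omega_\RR$ by a more flexible argument that works for an arbitrary $\mathcal{T}$ invariant finite set $\Omega$. Without loss of generality assume $r=1$ and write $\boldsymbol{j}=\boldsymbol{j}^{[1]}$. The key observation is that $\mathcal{T}$ invariance of $\ell(\Omega)$ is, by Definition~\ref{def_transition_operator}, precisely the statement that for every $j\in\{1,\ldots,J\}$ and every $d\in D_j$, if $\beta\in\Omega$ and $a_j(M_j\alpha-\beta+d)\neq 0$ then $\alpha\in\Omega$; equivalently, since the digits $d$ range over a full set of coset representatives of $\ZZ^s/M_j\ZZ^s$, every $\gamma\in\ZZ^s$ with $a_j(\gamma)\neq 0$, i.e.\ every $\gamma\in\operatorname{supp}(a_j)$, can be written as $\gamma=M_j\alpha-\beta+d$ for a unique $\alpha\in\ZZ^s$ and $d\in D_j$, and invariance forces that $\alpha\in\Omega$ whenever $\beta\in\Omega$. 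In set form this reads $M_j^{-1}(\operatorname{supp}(a_j)+\Omega-D_j)\cap\ZZ^s\subseteq\Omega$ for all $j$, i.e.\ $\Omega$ is invariant under the discrete counterparts of the maps $\mathcal{M}_j$ from~\eqref{eq:def_contractiveMj}.

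The second step is to transfer this discrete invariance to the target set $\Omega+K_{D\!,\,\boldsymbol{j}}$. I would prove, by induction on the truncation level $n$ of the attractor sums, the following: for every $x\in\operatorname{supp}(\phi_{\boldsymbol{j}})$ there is, for each $n$, an integer point $\alpha_n\in\ZZ^s$ and a point $y_n$ in a truncated-attractor set such that $x=\alpha_n+\sum_{r=1}^n(\prod_{\ell=1}^r M_{j_\ell}^{-1})d_{j_r}+(\text{tail})$ with $\alpha_n\in\Omega$. The mechanism is exactly the one used in Lemma~\ref{lem:Support_OmegaZ_Attractor}: by \cite[Lemma~1]{Groe2006} one has $\RR^s=K_{D\!,\,\boldsymbol{j}^{[r]}}+\ZZ^s$ at every level, so one peels off one digit at a time, each time rewriting the residual integer as $M_{j_1}^{-1}(\text{something}+\Omega\text{-shift}-D_{j_1})$ and using the discrete invariance $M_{j_1}^{-1}(\operatorname{supp}(a_{j_1})+\Omega-D_{j_1})\cap\ZZ^s\subseteq\Omega$ together with the fact (Lemma~\ref{thm_supp_refinement_equation}) that $\operatorname{supp}(\phi_{\boldsymbol{j}})\subseteq K_{A,\,\boldsymbol{j}}$ and the refinement relation~\eqref{equ_refinement_equation_1} which expresses $\phi_{\boldsymbol{j}^{[r]}}$ through $\operatorname{supp}(a_{j_r})$ and $\phi_{\boldsymbol{j}^{[r+1]}}$. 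Passing to the limit $n\to\infty$, using compactness of $\Omega$ (it is finite) and of $K_{D\!,\,\boldsymbol{j}}$ (Lemma~\ref{thm_attractor}), the integer sequence $\alpha_n$ is eventually constant, equal to some $\alpha\in\Omega$, and the tail converges to zero, yielding $x\in\alpha+K_{D\!,\,\boldsymbol{j}}\subseteq\Omega+K_{D\!,\,\boldsymbol{j}}$.

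The main obstacle is the bookkeeping in the inductive peeling step: one must verify that the residual that gets fed into the next application of $M_{j_1}^{-1}$ really lands in $\operatorname{supp}(a_{j_1})+\Omega-D_{j_1}$, rather than merely in some larger set, so that the discrete invariance of $\Omega$ can actually be invoked. In the proof of Lemma~\ref{lem:Support_OmegaZ_Attractor} this was finessed by working with the continuous set $\Omega_\RR$, where one only needed $K_{A,\,\boldsymbol{j}}-K_{D\!,\,\boldsymbol{j}}\subseteq\Omega_\RR$ and then intersected with $\ZZ^s$ at the very end; for a general $\Omega$ no such containment of the full difference set need hold, so the integrality has to be carried along through each step. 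A clean way to organize this is to induct on $r$ in the refinement equation: one shows by induction that $\operatorname{supp}(\phi_{\boldsymbol{j}^{[r]}})\subseteq\Omega+K_{D\!,\,\boldsymbol{j}^{[r]}}$ assuming it for $r+1$, using that any $x\in\operatorname{supp}(\phi_{\boldsymbol{j}^{[r]}})$ satisfies $M_{j_r}x\in\operatorname{supp}(a_{j_r})+\operatorname{supp}(\phi_{\boldsymbol{j}^{[r+1]}})\subseteq\operatorname{supp}(a_{j_r})+\Omega+K_{D\!,\,\boldsymbol{j}^{[r+1]}}$, then writing the $\Omega$-component's integer remainder modulo $M_{j_r}\ZZ^s$ via a digit in $D_{j_r}$ and applying the discrete invariance of $\Omega$; but since this induction is not anchored at a finite stage, one must close it with a compactness/limit argument rather than a finite descent. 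This is the step I expect to require the most care, and it is presumably the reason the statement is left as a conjecture rather than proved outright.
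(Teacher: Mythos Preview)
The statement you are addressing is labelled \textbf{Conjecture} in the paper and is explicitly left without proof; the authors write just before it that they ``conjecture that the result of Lemma~\ref{lem:Support_OmegaZ_Attractor} is true for an arbitrary finite $\Omega\subset\ZZ^s$'', and no argument is offered. So there is no paper proof to compare your proposal against.

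As for the proposal itself: your reformulation of $\mathcal{T}$-invariance as the discrete set inclusion $M_j^{-1}(\operatorname{supp}(a_j)+\Omega-D_j)\cap\ZZ^s\subseteq\Omega$ is correct, and the inductive step you describe is sound: if $\operatorname{supp}(\phi_{\boldsymbol{j}^{[r+1]}})\subseteq\Omega+K_{D,\boldsymbol{j}^{[r+1]}}$, then for $x\in\operatorname{supp}(\phi_{\boldsymbol{j}^{[r]}})$ one has $M_{j_r}x=\gamma+\omega+k$ with $\gamma\in\operatorname{supp}(a_{j_r})$, $\omega\in\Omega$, $k\in K_{D,\boldsymbol{j}^{[r+1]}}$; picking the unique $d\in D_{j_r}$ with $\gamma+\omega-d\in M_{j_r}\ZZ^s$ gives $\omega'=M_{j_r}^{-1}(\gamma+\omega-d)\in\Omega$ by discrete invariance, and then $x=\omega'+M_{j_r}^{-1}d+M_{j_r}^{-1}k\in\Omega+K_{D,\boldsymbol{j}^{[r]}}$.

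The genuine gap is exactly the one you name: this backward induction has no base. Your suggested remedy---run a forward peeling starting from some $\omega_0\in\Omega$ and pass to the limit---does not close the gap as stated. If one sets $\omega_0=0$ and defines $\omega_n=M_{j_n}^{-1}(\gamma_n+\omega_{n-1}-d_n)\in\Omega$ recursively, the resulting partial identities do not telescope to an expression of the form $x=\omega+\sum P_r d_r$ with $\omega\in\Omega$; the limit $\sum_{r\ge1}P_r(\gamma_r-d_r)$ is an infinite series of real vectors with no a priori reason to be an integer, let alone an element of $\Omega$. Equally, the claim that ``the integer sequence $\alpha_n$ is eventually constant'' is asserted but not justified, and in general need not hold. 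In short, you have correctly isolated why the paper's proof for $\Omega_\ZZ$ (which exploits the \emph{continuous} fixed-point set $\Omega_\RR\supseteq K_{A,\boldsymbol{j}}-K_{D,\boldsymbol{j}}$) does not transfer, and you have the right inductive mechanism, but the compactness/limit closure you gesture at is not supplied and does not follow from the ingredients you list. This unresolved step is precisely why the authors record the statement as a conjecture.
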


\section{Comparison of matrix and operator approaches: convergence of multiple subdivision schemes} \label{sec:JSRvRSR}

\noindent The goal of this section is to unify the matrix (joint spectral radius) and operator (restricted spectral radius)
approach in the setting of multiple subdivision schemes, see Theorem~\ref{th:main}. It generalizes
similar results in~\cite{Charina, CCS05} that were proven in the stationary setting for the case of the dilation
matrix $M=2I$.

\noindent One of the standard tools for checking the regularity of subdivision schemes is the so-called restricted
spectral radius (see e.g.~\cite{CDM, Charina, CCS04, Sauer2012}) that measures the spectral properties of the
difference subdivision operators restricted to a certain subspace of $\ell(\ZZ^s)$. 

The concept of the restricted
spectral radius relies on the difference operators and difference subdivision schemes operating on the sequences in $\ell(\ZZ^s)$. By $\ell(\ZZ^s,\RR^s)$ we denote the space of vector-valued (with values in $\RR^s$) sequences indexed by $\ZZ^s$.

\begin{definition} Let $e_\ell$, $1\leq \ell \leq s$, be the standard unit vectors of $\RR^s$. We define
\begin{description}
 \item[$(i)$] \emph{the $\ell$-th backward difference operator} $\nabla_{\!\ell}:\ell(\ZZ^s)\rightarrow\ell(\ZZ^s)$ \ by \
 $\nabla_{\!\ell}\, c=c-c(\cdot-e_\ell)$, \ $c \in \ell(\ZZ^s)$.
\item[$(ii)$] \emph{the backward difference operator} $\nabla:\ell(\ZZ^s)\rightarrow\ell(\ZZ^s,\RR^{s})$ \ by \
$ \displaystyle \nabla= \left(\begin{array}{cccc}
\nabla_{\!1}&
\nabla_{\!2}&
\dots&
\nabla_{\!s} \end{array} \right)^{\!T}$.
\end{description}
\end{definition}

\noindent The existence of difference subdivision operators (see e.g. for details~\cite{CDM, MoeSau}) is ensured by Assumption~\textbf{S}.

\begin{definition} Let $\mathcal{S}$ be a finite set of subdivision operators whose masks satisfy Assumption~$\operatorname{\hbox{\bf S}}$. For $S \in \mathcal{S}$,
\emph{a difference subdivision operator} $S':\ell(\ZZ^s) \rightarrow \ell(\ZZ^s, \RR^s)$ is defined by
\begin{equation}\label{def:Sprime}
 \nabla S=S' \nabla \, .
\end{equation}
By $\mathcal{S}'$ we denote a set of the difference operators $S'$ associated to the set $\mathcal{S}$ of subdivision operators.
 \end{definition}

\noindent In the setting of the multiple subdivision, we use the following definition of the restricted
spectral radius given in~\cite[section~3, ``\emph{normalized joint spectral radius}'']{Sauer2012}.

\begin{definition} \label{def:RSR} Let $\mathcal{S}$ be a finite set of subdivision operators whose masks satisfy
Assumption~\textbf{S}. The \emph{restricted norm} of $S' \in \mathcal{S}'$ is defined by
\begin{equation}\label{equ_def_RSR_norm}
	\|S'|_{\nabla}\|_\infty= \max_{\substack{ \|\nabla c \|_\infty=1}} \| S' \nabla c\|_{\infty}.
\end{equation}
The \emph{restricted spectral radius} of $\mathcal{S}'$ is defined by
\begin{equation}\label{equ_RSR}
\begin{aligned}
	\rho(\mathcal{S}'|_\nabla)=& \limsup_{n\rightarrow\infty} \sup_{ S'_{j_\ell} \in \mathcal{S}'}
		\left\|S'_{j_n}\cdots S'_{j_1}|_{\nabla}\right\|_\infty^{1/n}.
\end{aligned}
\end{equation}
\end{definition}

\noindent The main result of this section, Theorem~\ref{th:main}, leads to a characterization of convergence of $\mathcal{S}^\NN$ in terms of the joint spectral radius of the transition matrices (Definition~\ref{def:transition_matrices}) restricted to a common invariant subspace.
This characterization follows from Theorem~\ref{th:main} and the following result.

\begin{theorem}{\cite[Theorem~2]{Sauer2012}}\label{thm_RSR_convergence} $\mathcal{S}^\NN$ is convergent if and only if there exists $\mathcal{S}'$ such that
$\rho(\mathcal{S}'|_\nabla)<1$.
\end{theorem}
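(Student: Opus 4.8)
The plan is to prove the two implications separately, in each case working at the level of the difference scheme and using the defining relation $\nabla S=S'\nabla$, which iterates to
\[
 \nabla\bigl(S_{j_n}\cdots S_{j_1}c\bigr)=S'_{j_n}\cdots S'_{j_1}\,\nabla c,\qquad c\in\ell(\ZZ^s),
\]
for every string $(j_\ell)$. Thus $\rho(\mathcal{S}'|_\nabla)<1$ is exactly a statement of geometric decay of the differences of the subdivision iterates, and, conversely, convergence forces these differences to $0$. Note first that $\mathcal{S}'$ is not unique ($S'$ is pinned down only on $\operatorname{im}\nabla$, and $\|\cdot|_\nabla\|_\infty$ only probes $\operatorname{im}\nabla$), so all admissible choices give the same $\rho(\mathcal{S}'|_\nabla)$; this is why the statement reads ``there exists $\mathcal{S}'$'' and why one may fix a convenient choice. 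Existence of some $\mathcal{S}'$ is guaranteed by Assumption~\textbf{S} (sum rules), which in the ($\Rightarrow$) direction is available via Lemma~\ref{thm_sum_rule_zero}.

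For ($\Leftarrow$): pick $\gamma\in(\rho(\mathcal{S}'|_\nabla),1)$ and $C>0$ with $\|S'_{j_n}\cdots S'_{j_1}|_{\nabla}\|_\infty\le C\gamma^n$ for all $n$ and all strings. Given $c\in\ell_\infty(\ZZ^s)$ and $\boldsymbol{j}=(j_\ell)_{\ell\in\NN}$, I would attach the level-$n$ data $c_n:=S_{j_n}\cdots S_{j_1}c$ to the refined grid $\bigl(\prod_{\ell=1}^nM_{j_\ell}^{-1}\bigr)\ZZ^s$ and reconstruct a function $f_n\in C(\RR^s)$ by a fixed compactly supported nodal kernel (piecewise-linear or box-spline quasi-interpolant). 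A standard one-level estimate of Cavaretta--Dahmen--Micchelli type bounds $\|f_{n+1}-f_n\|_\infty$ by a constant (depending only on the kernel and the masks, using that constants are reproduced by Assumption~\textbf{S}) times $\|\nabla c_n\|_\infty\le C\gamma^n\|\nabla c\|_\infty$; joint expansion of $\{M_j\}$ makes the grid spacing shrink to $0$ so the reconstruction is faithful. Hence $(f_n)$ is uniformly Cauchy, its limit $g_{c,\boldsymbol{j}}$ is continuous and satisfies \eqref{equ_ss_conv_to_C0_functions}, and taking $c=\delta$ the sum rules keep the total mass of $f_n$ equal to $1$ at every level, so $g_{\delta,\boldsymbol{j}}=\phi_{\boldsymbol{j}^{[1]}}$ is non-zero. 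As $\boldsymbol{j}$ was arbitrary, $\mathcal{S}^\NN$ is convergent.

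For ($\Rightarrow$): convergence yields, for every $r$, a continuous compactly supported basic limit function $\phi_{\boldsymbol{j}^{[r]}}$ (Definition~\ref{def_basic_limit_function}, support bound Lemma~\ref{thm_supp_refinement_equation}), to which the reparametrized iterates $S_{j_n}\cdots S_{j_r}\delta$ converge uniformly; hence $S'_{j_n}\cdots S'_{j_r}\nabla\delta=\nabla(S_{j_n}\cdots S_{j_r}\delta)\to0$ in $\ell_\infty$. I would then restrict to the common finite $\mathcal{T}$-invariant index set $\Omega$ of Lemma~\ref{thm_invariantomega} (or $\Omega_C$ of Lemma~\ref{alg_invariantomega}): on $\ell(\Omega,\RR^s)\cap\operatorname{im}\nabla$ the products $S'_{j_n}\cdots S'_{j_1}$ become finitely many fixed matrices acting on a fixed finite-dimensional space, and $\{\nabla\delta(\cdot-\alpha)\}$ spans $\operatorname{im}\nabla$, so the computation of $\|S'_{j_n}\cdots S'_{j_1}|_{\nabla}\|_\infty$ is finite-dimensional. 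The pointwise decay to $0$ of all these matrix products then upgrades, by a compactness/uniformity argument over the finitely many symbols, to: there exist $n_0\in\NN$ and $\lambda<1$ with $\|S'_{j_{n_0}}\cdots S'_{j_1}|_{\nabla}\|_\infty\le\lambda$ for \emph{every} string $(j_1,\ldots,j_{n_0})$. Sub-multiplicativity of the restricted norm gives $\|S'_{j_n}\cdots S'_{j_1}|_{\nabla}\|_\infty\le C\lambda^{\lfloor n/n_0\rfloor}$, whence $\rho(\mathcal{S}'|_\nabla)\le\lambda^{1/n_0}<1$.

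The main obstacle is precisely the quantitative upgrade in the ($\Rightarrow$) direction: turning ``every infinite product of difference operators, restricted to $\operatorname{im}\nabla$, tends to $0$'' into ``one finite product already has restricted norm strictly below $1$, uniformly over all strings.'' Pointwise decay for a single sequence $\boldsymbol{j}$ is immediate from continuity of the $\phi_{\boldsymbol{j}^{[r]}}$, but uniformity over the uncountable family $\{1,\ldots,J\}^\NN$ must be extracted from the finiteness of the symbol set together with the finite dimension of the invariant subspace~-- essentially a K\"onig/Fekete-type argument for products of matrices (the mechanism that in the stationary case collapses to spectral radius $<1$ of a single power). A secondary point to verify throughout is that no estimate depends on the chosen extension $\mathcal{S}'$, which is automatic since only the restricted norm enters.
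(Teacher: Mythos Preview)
This theorem is not proved in the present paper; it is quoted from \cite[Theorem~2]{Sauer2012} and used as a black box alongside Theorem~\ref{th:main}. There is therefore no in-paper proof to compare your proposal against.

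On its own merits your sketch is the standard route and is sound. The ($\Leftarrow$) direction via a compactly supported quasi-interpolant and the one-level Cavaretta--Dahmen--Micchelli estimate is exactly how Sauer (and the stationary literature) argues; the sum rules enter both to build $\mathcal{S}'$ and to keep the reconstruction consistent across levels. For ($\Rightarrow$) you have correctly isolated the only non-formal step --- upgrading ``for each $\boldsymbol{j}$, $\nabla(S_{j_n}\cdots S_{j_1}\delta)\to0$'' to a uniform contraction at some finite length --- and the compactness argument you describe does go through: by Proposition~\ref{thm_different_RN} the restricted norm is computed over a \emph{fixed} finite set of test data, so for each $\boldsymbol{j}$ there is a first $N(\boldsymbol{j})$ with restricted norm below $1/2$; the sets $\{N\le n\}$ are clopen cylinders covering the compact product space $\{1,\ldots,J\}^\NN$, hence $N\le n_0$ uniformly; a greedy decomposition of any long string into blocks of length $\le n_0$, each contributing a factor $<1/2$, then gives the geometric bound. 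The only ingredient you use implicitly and should make explicit is that $\nabla(S_{j_n}\cdots S_{j_1}\delta)\to0$ relies on \emph{uniform} continuity of $\phi_{\boldsymbol{j}^{[1]}}$, which follows from its compact support (Lemma~\ref{thm_supp_refinement_equation}) together with the joint-expansion hypothesis forcing the grid spacing to zero.
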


\noindent Theorem~\ref{th:main} allows us to use the invariant polytope algorithm from~\cite{GP1} for the computation of
the joint spectral radius, when checking the convergence of multiple subdivision schemes, see section~\ref{sec:examples}.
The proof of Theorem~\ref{th:main} is similar to the one of~\cite[Proposition~4.6]{Charina}, see also~\cite{CCS05}. The crucial differences between stationary
and multiple cases are pointed out in  Propositions~\ref{thm_different_RN} and~\ref{thm_VkequVbark_existence}.
Proposition~\ref{thm_different_RN} is a generalization of~\cite[Proposition~4.1]{Charina}.

\begin{proposition}\label{thm_different_RN} Let $(S'_{j_\ell})_{\ell \in \NN}\in\mathcal{S'}^\NN$ be a difference subdivision scheme. Then
	\begin{equation}\label{equ_rth_restricted_normalized_norm}
	\|S'_{j_n}\cdots S'_{j_1}|_{\nabla}\|_\infty=\max_{\substack{\nabla c\in\ell_\infty(([-1,1]^s-K)\cap \ZZ^s)\\
			\|\nabla c\|_\infty = 1}}\, \max_{\alpha\in M_{j_n}\cdots M_{j_1}[0,1)^s\cap\ZZ^s}\,
		\left\|S'_{j_n}\cdots S'_{j_1} \nabla c(\alpha)\right\|_{\infty}, \quad n \in \NN,
	\end{equation}	
	where
	\begin{equation} \label{def:K}
	K=\bigcup_{j=1,\ldots,J} M_j^{-1}(\operatorname{supp}a_j + K).
	\end{equation}
	\end{proposition}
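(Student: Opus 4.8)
The plan is to establish the identity~\eqref{equ_rth_restricted_normalized_norm} by showing that the restricted norm, defined in~\eqref{equ_def_RSR_norm} as a maximum over \emph{all} sequences $c$ with $\|\nabla c\|_\infty = 1$ and over \emph{all} $\alpha \in \ZZ^s$, can be reduced to a maximum over finitely many ``localized'' data: sequences whose difference $\nabla c$ is supported on $([-1,1]^s - K) \cap \ZZ^s$, and indices $\alpha$ lying in the fundamental domain $M_{j_n}\cdots M_{j_1}[0,1)^s \cap \ZZ^s$. The two reductions are essentially independent, and I would carry them out in this order.

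First I would address the reduction of the index set for $\alpha$. The composed difference operator $S'_{j_n}\cdots S'_{j_1}$ is built from the subdivision operators $S_{j_\ell}$, each of which has the structure~\eqref{equ_subdivision_opertor} with dilation $M_{j_\ell}$; composing them yields an operator whose output at $\alpha$ depends on the input via the coset of $\alpha$ modulo $M_{j_n}\cdots M_{j_1}\ZZ^s$, in the sense that shifting the index $\alpha$ by $M_{j_n}\cdots M_{j_1}\gamma$ corresponds to shifting the input sequence $c$ by $\gamma$ (this is the standard commutation $S_j c(\cdot - M_j\gamma) = (S_j(c(\cdot-\gamma)))(\cdot)$, iterated, and it passes through $\nabla$ since $\nabla$ commutes with integer shifts). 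Because the supremum in~\eqref{equ_def_RSR_norm} is over all $c$ with $\|\nabla c\|_\infty=1$ — a shift-invariant constraint — the supremum of $\|S'_{j_n}\cdots S'_{j_1}\nabla c(\alpha)\|_\infty$ over all $\alpha \in \ZZ^s$ equals its supremum over one representative per coset, and $M_{j_n}\cdots M_{j_1}[0,1)^s \cap \ZZ^s$ is exactly such a complete set of coset representatives (a digit set for the product matrix).

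Next I would address the reduction of the support of $\nabla c$. Fix $\alpha$ in the fundamental domain. The value $S'_{j_n}\cdots S'_{j_1}\nabla c(\alpha)$ depends only on finitely many values of $\nabla c$, namely on $\nabla c(\beta)$ for $\beta$ in a finite ``dependency set'' determined by the supports of the masks and the dilation matrices; tracking this set through the composition and using the self-referential definition~\eqref{def:K} of $K$ (the analogue of $K_{A,\boldsymbol{j}}$ but with a union over $j$, so that it bounds the dependency set uniformly in the word $j_1,\ldots,j_n$ and in $n$), one identifies this dependency set with $M_{j_n}\cdots M_{j_1}(\text{something}) \cap \ZZ^s$; after translating back by $\alpha$ and rescaling, the relevant block of indices of $\nabla c$ lands in $([-1,1]^s - K)\cap\ZZ^s$. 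Since the operator acts linearly and $\|\nabla c\|_\infty=1$ only constrains the sup over all coordinates, replacing $\nabla c$ by the sequence that agrees with it on this finite dependency set and is set to an extremal ($\pm 1$) pattern elsewhere (or simply zero outside, which does not decrease the relevant output because only the dependency-set values matter) does not change the value at $\alpha$, and one may as well restrict to $\nabla c$ supported on $([-1,1]^s-K)\cap\ZZ^s$; the two maxima then coincide.

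The main obstacle I anticipate is the bookkeeping in the second reduction: one must verify carefully that the support $[-1,1]^s - K$ (with the cube coming from the one-step difference $\nabla$ and $K$ from accumulated mask supports) is exactly the right set — neither too small (so that no needed values are omitted) nor artificially large — and that the fixed-point characterization~\eqref{def:K} of $K$ closes the recursion uniformly over all finite words $(j_1,\ldots,j_n)$, which is precisely where the level-dependence of the dilation matrices enters and where the argument departs from the stationary case of~\cite[Proposition~4.1]{Charina}. Once the dependency set is pinned down, the equality of the two maxima is a routine consequence of linearity and the shift-invariance of the constraint $\|\nabla c\|_\infty=1$.
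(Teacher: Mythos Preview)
Your proposal is correct and follows essentially the same route as the paper's proof: first exploit the coset-periodicity of the composed operator to restrict $\alpha$ to the digit set $M_{j_n}\cdots M_{j_1}[0,1)^s\cap\ZZ^s$, then write $S'_{j_n}\cdots S'_{j_1}\nabla c(\alpha)$ as a finite sum $\sum_\beta (S'_{j_n}\cdots S'_{j_1}\delta I)(\alpha-M_{j_n}\cdots M_{j_1}\beta)\,\nabla c(\beta)$ and track the support of the kernel to pin down the relevant $\beta$'s inside $([-1,1]^s-K)\cap\ZZ^s$ via the fixed-point equation~\eqref{def:K}. One small clarification for your bookkeeping step: the cube $[-1,1]^s$ does not arise from the difference operator $\nabla$ but rather from $M_{j_1}^{-1}\cdots M_{j_n}^{-1}\alpha\in[0,1)^s$ (after $\alpha$ has been restricted to the fundamental domain) together with a harmless enlargement; the paper obtains $\beta\in[0,1)^s - M_{j_1}^{-1}\operatorname{supp}a_{j_1}-\cdots-M_{j_1}^{-1}\cdots M_{j_n}^{-1}\operatorname{supp}a_{j_n}$ and then invokes $0\in\operatorname{supp}a_j$ and the definition of $K$ to land in $([-1,1]^s-K)\cap\ZZ^s$.
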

	
\begin{proof} By definition of $S'_{j_\ell}$ 
we get
$$
		\|S'_{j_n}\cdots S'_{j_1}|_{\nabla}\|_\infty
		=\max_{\substack{ \|\nabla c\|_\infty = 1}}\,
		\sup_{\alpha\in\ZZ^s}\, \left\| \sum_{\beta\in\ZZ^s}  (S'_{j_n}\cdots S'_{j_1}\delta I)(\alpha - M_{j_n}\cdots M_{j_1} \beta)\nabla c(\beta)
		\right\|_\infty.
$$
Due to the periodicity of the subdivision (i.e.\ $|M_{j_n}\cdots M_{j_1}[0,1)^s\cap\ZZ^s|$ different subdivision rules at
the $n$-th
level of subdivision recursion), it suffices to take $\alpha\in M_{j_n}\cdots M_{j_1}[0,1)^s\cap\ZZ^s$. Finally, by~\cite[Remark~3.7]{Charina}
$$
 \operatorname{supp}(S'_{j_n}\cdots S'_{j_1}\delta I) \subseteq 
 M_{j_n}\cdots M_{j_2} \operatorname{supp}a_{j_1}+ \cdots +
 M_{j_n} \operatorname{supp}a_{j_{n-1}}+ \operatorname{supp}a_{j_n}.
$$
For $\alpha-M_{j_n}\cdots M_{j_1}\beta \in \operatorname{supp}(S'_{j_n}\cdots S'_{j_1}\delta I)$, by a similar argument as in the proof of Lemma~\ref{lem:Support_OmegaZ_Attractor} and due to $0\in\operatorname{supp}a_j$,  we obtain
\begin{align*}
	\beta &\in [0,1)^s\cap M_{j_1}^{-1}\cdots M_{j_n}^{-1}\ZZ^s - M_{j_1}^{-1} \operatorname{supp}a_{j_1} - \cdots -M_{j_1}^{-1}\cdots
	M_{j_n}^{-1} \operatorname{supp}a_{j_n}\\
&	\subseteq ([-1,1]^s-K) \cap \ZZ^s,
\end{align*}
where $K \subset \RR^s$ is the unique compact set satisfying the fixed point equation~\eqref{def:K}.
\end{proof}

\noindent Sufficient conditions for continuity of refinable functions or characterizations of continuity of
basic limit functions of subdivision schemes are usually formulated in terms of the spectral properties of
restrictions of transition matrices to $V_\Omega$ in~\eqref{equ_def_Vk} or restrictions of difference subdivision operators to $\tilde{V}_\Omega$ in~\eqref{equ_def_Vkbar}, respectively.

\begin{definition}\label{def_V0} Let $\Omega\subset\ZZ^s$ be finite with $n=|\Omega|$. We define the linear spaces
\begin{equation}\label{equ_def_Vk}
V_\Omega=\big\{ v\in \RR^n\;:\; \sum_{\beta\in\Omega} v(\beta)=0 \big\},
\end{equation}
\begin{equation}\label{equ_def_Vkbar}
\tilde{V}_\Omega= \operatorname{span}\left\{ v \in \ell_0(\ZZ^s) \;:\; v= \nabla \delta (\cdot-\beta), \ \beta\in \ZZ^s, \ \operatorname{supp}v \subseteq \Omega
\right\}.
\end{equation}
\end{definition}

\noindent In the rest of the paper, we view $\tilde{V}_\Omega$ as a subspace of $\RR^n$ (or, equivalently, $V_\Omega$ as a subspace of $\ell_0(\ZZ^s)$) and make use of the following properties of $V_\Omega$ and $\tilde{V}_\Omega$.

\begin{lemma}\label{thm_VkequVbark_dim}
Let $\Omega\subset\ZZ^s$ be  finite. Then $\tilde{V}_\Omega\subseteq V_\Omega$ and, if $\operatorname{dim}V_\Omega=
\operatorname{dim}\tilde{V}_\Omega$, then $V_\Omega = \tilde{V}_\Omega$.
\end{lemma}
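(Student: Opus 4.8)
The plan is to prove the two assertions separately: first the inclusion $\tilde{V}_\Omega\subseteq V_\Omega$, then the equality under the dimension hypothesis. For the inclusion, I would take a generating element of $\tilde{V}_\Omega$, namely $v=\nabla\delta(\cdot-\beta)$ for some $\beta\in\ZZ^s$ with $\operatorname{supp}v\subseteq\Omega$, and check directly that $\sum_{\gamma\in\Omega}v(\gamma)=0$. Since $v=\nabla\delta(\cdot-\beta)$ is vector-valued with $\ell$-th component $\nabla_{\!\ell}\,\delta(\cdot-\beta)=\delta(\cdot-\beta)-\delta(\cdot-\beta-e_\ell)$, each component sums to $1-1=0$ over all of $\ZZ^s$, hence over $\Omega$ (the support being contained in $\Omega$). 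Because $V_\Omega$ is a linear space and $\tilde{V}_\Omega$ is by definition the span of such $v$, linearity gives $\tilde{V}_\Omega\subseteq V_\Omega$.

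For the second assertion I would invoke the elementary fact of linear algebra that if $W\subseteq V$ are subspaces of a finite-dimensional vector space with $\dim W=\dim V$, then $W=V$. This is immediate: a basis of $W$ is a linearly independent subset of $V$ of size $\dim V$, hence a basis of $V$, so the spans coincide. Applying this with $W=\tilde{V}_\Omega$ and $V=V_\Omega$, which are both finite-dimensional (being subspaces of $\RR^n$ with $n=|\Omega|$), yields $V_\Omega=\tilde{V}_\Omega$ whenever $\dim V_\Omega=\dim\tilde{V}_\Omega$.

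There is essentially no main obstacle here; the only point requiring a moment's care is bookkeeping the vector-valued nature of the generators of $\tilde{V}_\Omega$ and the identification of $\tilde{V}_\Omega$ with a subspace of $\RR^n$ (as opposed to $(\RR^s)^n$), which the paragraph preceding the lemma already fixes by viewing $\tilde{V}_\Omega\subseteq V_\Omega$ under that identification. Concretely, once one regards each scalar component separately, the claim $\sum_{\beta\in\Omega}v(\beta)=0$ for $v=\nabla\delta(\cdot-\beta)$ is just the telescoping statement that the differences $\delta(\cdot-\beta)-\delta(\cdot-\beta-e_\ell)$ sum to zero. The proof is therefore short and purely formal, and I would present it in two or three lines.
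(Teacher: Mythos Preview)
Your proposal is correct. The paper actually states this lemma without proof, treating both assertions as self-evident; your argument---checking that each generator $\nabla_{\!\ell}\,\delta(\cdot-\beta)$ has entries summing to zero, then invoking the elementary fact that a subspace of full dimension coincides with the ambient space---is exactly the routine verification the authors omit.
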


\noindent We are now ready to formulate the main result of this section,
Theorem~\ref{th:main}.  The proof of Theorem~\ref{th:main} is given in subsection~\ref{subsec:proof_main}.

\begin{theorem} \label{th:main}
 Let $\mathcal{S}$ be a finite set of subdivision operators whose masks satisfy Assumption~$\operatorname{\hbox{\bf S}}$.
 Assume that there exists a finite set $\Omega \subset \ZZ^s$ such that
 \begin{itemize}
  \item[$(i)$] $\ell(\Omega)$ is invariant under the transition operators in $\mathcal{T}$ and
  \item[$(ii)$] $V_\Omega = \tilde{V}_\Omega$.
 \end{itemize}
 Then $\rho(\mathcal{S}'|_\nabla)=\rho(\{T_{d,j,\Omega}|_{V_\Omega} \;:\; d \in D_j, \ j=1,\ldots,J\})$.
\end{theorem}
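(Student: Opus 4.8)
The plan is to establish the equality $\rho(\mathcal{S}'|_\nabla)=\rho(\{T_{d,j,\Omega}|_{V_\Omega}\})$ by identifying the action of products of difference subdivision operators on $\tilde V_\Omega$ with the action of products of transition matrices on $V_\Omega$, using hypothesis $(ii)$ to pass freely between $V_\Omega$ and $\tilde V_\Omega$. The overarching template is the stationary argument of~\cite[Proposition~4.6]{Charina}, adapted so that at the $n$-th level we read off a \emph{product} $T_{d_n,j_n,\Omega}\cdots T_{d_1,j_1,\Omega}$ indexed by a sequence of digits $d_\ell\in D_{j_\ell}$ rather than powers of a single operator, and so that the dilation at level $\ell$ is $M_{j_\ell}$ rather than a fixed $M$.

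First I would make the combinatorial bookkeeping precise. Fix $\boldsymbol j=(j_\ell)$ and $n\in\NN$. The coset $M_{j_n}\cdots M_{j_1}\ZZ^s$ has index $|\det(M_{j_n}\cdots M_{j_1})|=\prod_{\ell}|\det M_{j_\ell}|$ in $\ZZ^s$, and every $\alpha\in\ZZ^s$ decomposes uniquely as $\alpha=M_{j_n}\cdots M_{j_1}\gamma+\big(M_{j_n}\cdots M_{j_2}d_1+M_{j_n}\cdots M_{j_3}d_2+\cdots+d_n\big)$ with $d_\ell\in D_{j_\ell}$ — this is the standard radix representation relative to the composite dilation, and it is exactly the index set $M_{j_n}\cdots M_{j_1}[0,1)^s\cap\ZZ^s$ appearing in Proposition~\ref{thm_different_RN}. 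Using $\nabla S_{j_\ell}=S'_{j_\ell}\nabla$ iteratively one shows that, for $c$ with $\nabla c\in\ell(\Omega)$, the vector $(S'_{j_n}\cdots S'_{j_1}\nabla c)(\alpha)$ at such an $\alpha$ equals $\big(\mathcal T_{d_n,j_n}\cdots\mathcal T_{d_1,j_1}\nabla c\big)(\gamma)$ up to the identification $\nabla S_{j_n}\cdots S_{j_1}=S'_{j_n}\cdots S'_{j_1}\nabla$; this is where Definition~\ref{def_transition_operator} of $\mathcal T_{d,j}$ is engineered so that the digits peel off in the correct order. Because $\ell(\Omega)$ is $\mathcal T$-invariant by $(i)$, the support stays inside $\Omega$ throughout, so on $\Omega$ the operator $\mathcal T_{d_n,j_n}\cdots\mathcal T_{d_1,j_1}$ acts as the matrix product $T_{d_n,j_n,\Omega}\cdots T_{d_1,j_1,\Omega}$. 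Moreover, by Lemma~\ref{thm_invariantone}$(i)$ each $T_{d,j,\Omega}$ fixes the row vector $(1,\dots,1)$, hence preserves $V_\Omega$; and $\nabla\delta(\cdot-\beta)\in\tilde V_\Omega\subseteq V_\Omega$ always (Lemma~\ref{thm_VkequVbark_dim}), so the whole computation takes place inside $V_\Omega$.

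Next I would combine this with Proposition~\ref{thm_different_RN}. That proposition already reduces $\|S'_{j_n}\cdots S'_{j_1}|_\nabla\|_\infty$ to a maximum over $\nabla c$ supported on a fixed finite set $([-1,1]^s-K)\cap\ZZ^s$ with $\|\nabla c\|_\infty=1$ and over $\alpha$ in the finite digit set — exactly the regime handled above. Crucially one needs the finite support set $([-1,1]^s-K)\cap\ZZ^s$ from Proposition~\ref{thm_different_RN} to be contained in (a translate of) $\Omega$, so that $\nabla c$ there can be regarded as an element of $\tilde V_\Omega=V_\Omega$; if $\Omega$ is only $\mathcal T$-invariant but too small, one enlarges it (the invariance and the hypothesis $V_\Omega=\tilde V_\Omega$ are both preserved under the relevant enlargement, cf.\ the remark after Lemma~\ref{alg_invariantomega}), and the spectral radius of the restricted matrices is unchanged because $V_\Omega$ decomposes as the old $V$ plus a complementary invariant piece on which the matrices act as identity/shift with spectral radius at most that of the dilations' inverses — here I should be careful and may instead invoke $(ii)$ directly as stated. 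With all sequences living in $V_\Omega=\tilde V_\Omega$, the right-hand side of~\eqref{equ_rth_restricted_normalized_norm} becomes $\max_{v\in V_\Omega,\,\|v\|=1}\max_{d_\ell}\|T_{d_n,j_n,\Omega}\cdots T_{d_1,j_1,\Omega}v\|_\infty$, which up to the fixed norm-equivalence constant on the finite-dimensional space $V_\Omega$ is comparable to $\max_{d_\ell}\|T_{d_n,j_n,\Omega}\cdots T_{d_1,j_1,\Omega}|_{V_\Omega}\|$. Taking $n$-th roots and $\limsup_n$ kills the equivalence constant and yields $\rho(\mathcal S'|_\nabla)=\rho(\{T_{d,j,\Omega}|_{V_\Omega}\})$, where on the transition-matrix side the maximum over all sequences $(d_\ell,j_\ell)$ is precisely the joint spectral radius over the finite set $\{T_{d,j,\Omega}|_{V_\Omega}: d\in D_j,\ j=1,\dots,J\}$.

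The main obstacle I expect is the bookkeeping in the first step: verifying that the digit $d_\ell\in D_{j_\ell}$ extracted at level $\ell$ from the radix expansion of $\alpha$ relative to $M_{j_n}\cdots M_{j_1}$ matches exactly the digit parametrizing $\mathcal T_{d_\ell,j_\ell}$, with the indices composed in the correct (reversed) order, and that no boundary/overlap issues arise from our digit sets $D_j=\ZZ^s\cap M_j[0,1)^s$ failing the tiling property — the paper explicitly flags that these $D_j$ give a covering, not a tiling, so one must check the argument uses only that $D_j$ is a complete set of coset representatives (which it is) and never uniqueness of a geometric tiling. A secondary technical point is the interchange of the restriction-to-$V_\Omega$ with the $\limsup$ of $n$-th roots, i.e.\ confirming that the norm-equivalence constant relating $\|\cdot\|_\infty$ on $\ell(\Omega)$ to the operator norm on $V_\Omega$ is independent of $n$ (it is, since $\Omega$ is fixed and finite), so it disappears in the limit; together with Proposition~\ref{thm_different_RN} this makes the two spectral radii literally equal rather than merely comparable.
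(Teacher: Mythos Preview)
Your proposal is essentially correct and follows the same template as the paper's proof: both generalize~\cite[Proposition~4.6]{Charina}, both invoke Proposition~\ref{thm_different_RN} to localize the restricted norm to finitely supported data and a finite digit set, both use assumption~$(ii)$ together with Lemma~\ref{thm_invariantone}$(i)$ to work inside $V_\Omega$, and both finish by taking $n$-th roots so that the fixed, $n$-independent equivalence constants disappear.

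The one place where you diverge from the paper is your attempt to fit the whole test sequence $\nabla c$ (with $c$ supported on $\Omega_K=([-1,1]^s-K)\cap\ZZ^s$) directly into $\tilde V_\Omega$, which forces you to contemplate enlarging $\Omega$ and then to argue that the joint spectral radius of the restricted transition matrices is unchanged under this enlargement --- a step you yourself flag as uncertain. The paper sidesteps this entirely by inserting $\nabla\delta$ as a bridge: it first shows (via the analogue of~\cite[Lemma~4.5]{Charina} and assumption~$(ii)$) that
\[
|\Omega|^{-2}\,\max_{T_\ell\in\mathcal T}\|T_n\cdots T_1|_{V_\Omega}\|_\infty
\ \le\ \|S'_{j_n}\cdots S'_{j_1}\nabla\delta\|_\infty
\ \le\ \max_{T_\ell\in\mathcal T}\|T_n\cdots T_1|_{V_\Omega}\|_\infty,
\]
and then sandwiches $\|S'_{j_n}\cdots S'_{j_1}|_\nabla\|_\infty$ between multiples of $\|S'_{j_n}\cdots S'_{j_1}\nabla\delta\|_\infty$ using that $\delta\in\ell_\infty(\Omega_K)$ with $\|\nabla\delta\|_\infty=1$ for the lower bound, and the decomposition $\nabla c=\sum_{\alpha}c(\alpha)\nabla\delta(\cdot-\alpha)$ together with shift-invariance for the upper bound (constant $|\Omega_K|\cdot\|c\|_\infty$). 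This route never needs $\Omega_K\subseteq\Omega$ and never touches the question of how the joint spectral radius behaves under enlarging $\Omega$, so it is cleaner than your proposed fix.
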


\noindent  Example~\ref{ex_mult1} shows that assumption $(ii)$ of Theorem~\ref{th:main} is indeed crucial.
The natural candidate for such a set $\Omega$ would be the set $\Omega_C$ from Lemma~\ref{alg_invariantomega}.
The set $\Omega_C$, by Lemma~\ref{alg_invariantomega}, satisfies assumption $(i)$ of Theorem~\ref{th:main}
and our numerical experiments show that in most cases $\Omega_C$ also satisfies the assumption $(ii)$.
However, Example~\ref{ex_V0_neq_V0bar} illustrates that the case $\tilde{V}_{\Omega_C} \subset V_{\Omega_C}$
occurs sometimes even in the stationary setting.
In such cases, we choose $\Omega=\Omega_V$ from Proposition~\ref{thm_VkequVbark_existence}.

\begin{example}\label{ex_V0_neq_V0bar}
%
%
%

Consider the dilation matrix  $M=\left(\begin{array}{rr}-3& -4\\ 4& 4\end{array}\right)$ with the digit set
$D=\{ (-k,k) : k=0,1,2,3 \}$ and choose any mask $a$ with
$$
 \operatorname{supp}(a)=\left\{ \left(\begin{array}{c}1\\0\end{array}\right),
\left(\begin{array}{c}2\\1\end{array}\right),
\left(\begin{array}{c}3\\1\end{array}\right),
\left(\begin{array}{c}1\\2\end{array}\right),
\left(\begin{array}{c}0\\3\end{array}\right),
\left(\begin{array}{c}1\\4\end{array}\right),
\left(\begin{array}{c}3\\4\end{array}\right)
\right\}.
$$
The set $\Omega_C$ constructed by the algorithm in Lemma~\ref{alg_invariantomega} is drawn in Figure~\ref{fig_V0_neq_V0bar}. 

Straightforward computation shows that $\operatorname{dim} V_{\Omega_C}=33 > \operatorname{dim} \tilde{V}_{\Omega_C}=32$.
Thus, $\Omega_C$ will be inappropriate for further theoretical analysis. The problematic point is $(-2,1)$ which has no direct neighbour.
See Remark~\ref{rem:badOmegaC} for more details.
The Matlab code to produce Figure~\ref{fig_V0_neq_V0bar} is
\begin{verbatim}
S=getS('2_ex_V0neqV0bar_1');
Om=constructOmega(S);
plotm(Om,'k.','MarkerSize',10)
axis equal; axis([-3 9 -10 2]);
\end{verbatim}
\end{example}

\begin{figure}
	\centering
	\includegraphics[width=.99\textwidth,height=4cm,keepaspectratio]{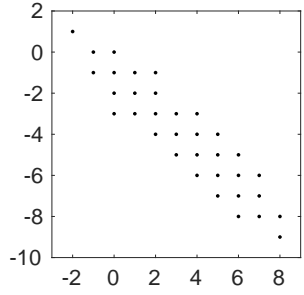}
	\caption{The set $\Omega_C$ from Example~\ref{ex_V0_neq_V0bar}.}
	\label{fig_V0_neq_V0bar}
\end{figure}

\noindent In Proposition~\ref{thm_VkequVbark_existence},
we determine a finite set $\Omega_V \subset \ZZ^s$ such that $V_{\Omega_V} = \tilde{V}_{\Omega_V}$.
The structure of $\Omega_V$ is adapted to the definition of the restricted spectral radius and makes the link 
between the two spectral radii more evident. The definition of the set $\Omega_V$ is straightforward in comparison 
to the set $\Omega_C$ from Lemma~\ref{alg_invariantomega}, but the latter is by far more efficient for 
computations in section \ref{sec:examples}. 

\noindent For simplicity of presentation and without loss of generality we make the following assumption.

\smallskip 
\noindent 
\textbf{Assumption N:} {\it 
 We assume that 
 \begin{equation}\label{equ_assumption_N}
\|M_j^{-1}\|_2<1,\ j=1, \ldots,J.
 \end{equation}}
\smallskip

\noindent 
The above assumption is true for a variety of dilation matrices considered in the literature,
but is not true e.g. for the dilation matrix $M=\left(\begin{array}{rr} 1&-2\\2&-1 \end{array} \right)$ of the
$\sqrt{3}$-subdivision. Nevertheless, Definition~\ref{def_jointly_expanding}, norm equivalences and~\cite{Rota60} guarantee the existence of $n \in \NN$
such that any product of $n$ matrices from the set $\{M_j^{-1} \;:\; j=1,\ldots,J\}$ satisfies 
Assumption~\textbf{N}. Indeed, for this matrix $\|M^{-2}\|_2<1$.  If $n>1$, we then study the
convergence of multiple subdivision defined by $\mathcal{S}^n$ instead of $\mathcal{S}$ in~\eqref{def:setS}.

\begin{proposition}\label{thm_VkequVbark_existence} For the set $\mathcal{T}$ of transition operators, there exists
 a finite set $\Omega_V \subset\ZZ^s$ such that
\begin{enumerate}
\item[$(i)$] $\ell(\Omega_V)$ is invariant under operators in $\mathcal{T}$ and
\item[$(ii)$] $V_{\Omega_V} = \tilde{V}_{\Omega_V}$.
\end{enumerate}
\end{proposition}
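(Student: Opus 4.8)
The plan is to construct $\Omega_V$ explicitly as a suitable enlargement of a $\mathcal{T}$-invariant set that, by construction, contains all the integer translates $e_\ell$ of its points needed to witness $\nabla\delta(\cdot-\beta)\in\tilde V_{\Omega_V}$. First I would start from the set $K\subset\RR^s$ satisfying the fixed point equation~\eqref{def:K} (equivalently, take $\Omega_\RR$ from Lemma~\ref{thm_invariantomega}), which under Assumption~\textbf{N} is contractive with respect to $\|\cdot\|_2$, and enlarge it to $\widehat K = K + [-1,1]^s$ or, better, to the set generated by the contractive maps $X\mapsto M_j^{-1}(\operatorname{supp}a_j + X - D_j)$ started from a ball large enough to absorb one extra lattice step in each coordinate direction. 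Concretely, since the maps $\mathcal{M}_j$ in~\eqref{eq:def_contractiveMj} are $\|\cdot\|_2$-contractions with some common factor $q<1$ (here Assumption~\textbf{N} is used to make the contraction factor explicit, which is why we may pass to $\mathcal{S}^n$), their common attractor is contained in a fixed Euclidean ball $B$, and I would take $\Omega_V = (\,\bigcup_j \mathcal{M}_j(B)\,)\cap\ZZ^s$ after one more inflation so that $\Omega_V$ is ``thick'': whenever $\beta\in\ZZ^s$ and $\operatorname{supp}(\nabla\delta(\cdot-\beta))\subseteq\Omega_V$, i.e.\ $\beta+e_\ell\in\Omega_V$ for the relevant $\ell$, we also have $\beta\in\Omega_V$ and all intermediate points are in $\Omega_V$.

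The verification of $(i)$ is then routine: by the same computation as in the proof of Lemma~\ref{thm_invariantomega}, $\operatorname{supp}(\mathcal{T}_{d,j}v)\subseteq\bigcup_j M_j^{-1}(\operatorname{supp}a_j+\Omega_V-D_j)\cap\ZZ^s$, and this is contained in $\Omega_V$ provided $\Omega_V$ was taken to contain the integer points of its own image under the union of the $\mathcal{M}_j$; running the fixed point iteration of~\cite[section~3.1]{Hut81} one more step from the inflated seed and then intersecting with $\ZZ^s$ gives exactly such a set (this is precisely the mechanism of Lemma~\ref{alg_invariantomega}, applied with an enlarged $\Omega_0$ and enlarged supports as in the Remark after it). For $(ii)$, by Lemma~\ref{thm_VkequVbark_dim} it suffices to show $\dim V_{\Omega_V}=\dim\tilde V_{\Omega_V}$, equivalently $V_{\Omega_V}\subseteq\tilde V_{\Omega_V}$; and for this it is enough that $\Omega_V$ be \emph{edge-connected} in the sense that its adjacency graph (points joined when they differ by some $e_\ell$) is connected, since then any zero-sum vector supported on $\Omega_V$ is a linear combination of the edge differences $\delta(\cdot-\beta)-\delta(\cdot-\beta-e_\ell)=\nabla_\ell\delta(\cdot-\beta)$, each of which lies in $\tilde V_{\Omega_V}$ because both $\beta$ and $\beta+e_\ell$ lie in $\Omega_V$. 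So the construction must guarantee edge-connectivity of $\Omega_V$.

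Edge-connectivity is the heart of the matter and the main obstacle: $\Omega_C$ can fail it (Example~\ref{ex_V0_neq_V0bar}, the isolated point $(-2,1)$), so the inflation has to be chosen precisely to repair such gaps. The way I would secure it is to note that $\widehat\Omega_\RR := \bigcup_j \mathcal{M}_j(\widehat B)$ for $\widehat B$ a box (product of intervals) large enough to be a fixed set under one pass of the $\mathcal{M}_j$ up to the desired inflation is \emph{path-connected} as a subset of $\RR^s$ (it is a finite union of affine images of a connected set, and the contraction forces these images to overlap once $\widehat B$ is large relative to $\operatorname{supp}a_j$ and $D_j$ — here one uses $0\in\operatorname{supp}a_j\cap D_j$ to get a common point), and a connected bounded subset of $\RR^s$ containing a unit-size neighbourhood of each of its lattice points has edge-connected integer part. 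Thus taking $\Omega_V = \widehat\Omega_\RR\cap\ZZ^s$ with $\widehat\Omega_\RR\supseteq \Omega_\RR+[-1,1]^s$ connected yields both the invariance $(i)$ and the dimension equality $(ii)$. The remaining work — checking that the inflation by $[-1,1]^s$ is still absorbed by the contractive maps after finitely many iterations (which is where $q<1$ from Assumption~\textbf{N} enters quantitatively), and that $\Omega_V$ is finite — is a direct estimate in the spirit of the proof of Lemma~\ref{thm_attractor}.
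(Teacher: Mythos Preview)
Your proposal identifies the right mechanism for $(ii)$: the equality $V_{\Omega_V}=\tilde V_{\Omega_V}$ holds precisely when the ``adjacency graph'' on $\Omega_V$ (vertices $\Omega_V$, edges between points differing by a unit vector $e_\ell$) is connected, since then every zero-sum vector on $\Omega_V$ decomposes along a spanning tree into the edge vectors $\nabla_\ell\delta(\cdot-\beta)$. This is exactly the argument the paper uses. Where you diverge from the paper is in the \emph{construction} of $\Omega_V$, and there your route is both more complicated and contains a genuine gap.

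The paper does not inflate the attractor at all. It simply sets
\[
\Omega_V=\Big\{x\in\RR^s:\|x\|_2\le \tfrac{C_a+C_D}{1-C_M}\Big\}\cap\ZZ^s,
\qquad C_M=\max_j\|M_j^{-1}\|_2,\ C_a=\max_{j,\alpha\in\operatorname{supp}a_j}\|\alpha\|_2,\ C_D=\max_{j,d\in D_j}\|d\|_2,
\]
i.e.\ the lattice points in a Euclidean ball. Invariance $(i)$ is then a one-line estimate: if $\alpha\in M_j^{-1}(\operatorname{supp}a_j-d+\Omega_V)$ then $\|\alpha\|_2\le C_M(C_a+C_D+\tfrac{C_a+C_D}{1-C_M})\le\tfrac{C_a+C_D}{1-C_M}$. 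Edge-connectivity for $(ii)$ is immediate: from any $\alpha\in\Omega_V$, successively replacing a nonzero coordinate $\alpha_i$ by $\alpha_i-\operatorname{sign}(\alpha_i)$ strictly decreases $\|\cdot\|_2$, so the walk stays in the ball and reaches $0$. No attractor, no inflation, no overlap argument.

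Your argument, by contrast, hinges on the claim that ``a connected bounded subset of $\RR^s$ containing a unit-size neighbourhood of each of its lattice points has edge-connected integer part.'' This is false as stated. If the neighbourhood is $[-1,1]^s$, the hypothesis forces every lattice neighbour of a lattice point of $X$ back into $X$, so by induction $X\cap\ZZ^s$ is either empty or all of $\ZZ^s$---impossible for bounded $X$. If the neighbourhood is $[-\tfrac12,\tfrac12]^s$, take $X=[-0.6,0.6]^2\cup\gamma\cup\big((10,0)+[-0.6,0.6]^2\big)$ with $\gamma$ a thin arc avoiding the $\tfrac12$-neighbourhood of every lattice point: then $X$ is connected, both lattice points $(0,0)$ and $(10,0)$ carry their half-cubes in $X$, yet $X\cap\ZZ^2=\{(0,0),(10,0)\}$ is not edge-connected. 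Path-connectedness of a thickened attractor therefore does \emph{not} by itself give edge-connectivity of its integer part; you would need an additional convexity-type property, and at that point you are essentially rebuilding the ball. The fix is to drop the attractor construction and take the ball directly.
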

\begin{proof}
By Assumption~\textbf{N} in~\eqref{equ_assumption_N},
$$
 C_M=\max_{j=1,\ldots,J} \|{M_j^{-1}}\|_2 <1.
$$
Due to the finite support of the masks and finiteness of digit sets, we get finite constants
\begin{equation*}
C_a=\max \{\|\alpha\|_2\;:\; \alpha\in\operatorname{supp}(a_j),\ j=1,\ldots,J\}
\quad\text{and}\quad
C_D=\max \{ \|d\|_2\;:\; d\in D_j, \ j=1,\ldots,J\}.
\end{equation*}
We define
\begin{equation*}
 \Omega_V =\big\{x \in \RR^s\;:\; \|x\|_2 \le \frac{C_a+C_D}{1-C_M} \big\}\cap\ZZ^s.
\end{equation*}

$(i)$ Let $d\in D_j$,
$j=1,\ldots,J$. By~\eqref{equ_transition_operator}, $\mathcal{T}_{d,j} v(\alpha) \not =0$, if
$\alpha\in\ZZ^s$ is such that $M_j\alpha-\beta+d \in\operatorname{supp}a_j$ for some $\beta\in\operatorname{supp}v\subseteq \Omega_V$, or, equivalently, $\alpha \in M_j^{-1}(\operatorname{supp}a_j-d+\Omega_V)$. Thus, we obtain
\begin{equation} \label{eq:aux}
 \|{\alpha\|}_2\leq C_M \left(C_a+C_d+\frac{C_a+C_D}{1-C_M}\right)=\frac{C_a+C_D}{1-C_M} (C_M(1-C_M) + C_M)\le
\frac{C_a+C_D}{1-C_M},
\end{equation}
since $-C_M^2+2C_M \le 1$, implying $\mathcal{T}_{d,j} v \in \ell(\Omega_V)$.

$(ii)$ The dimension of $V_{\Omega_V}$ is 
$|\Omega_V|-1$, due to $V_{\Omega_V}$ being orthogonal to the vector
of all ones. To determine the dimension of $\tilde{V}_{\Omega_V}$, we consider the graph $G=(\Omega_V,W)$ with the set of edges
$$
 W=\{ (w_1, w_2) \in \Omega_V^2 \;:\; \|w_1-w_2\|_1=1\}.
$$
Using this point of view, every sequence of the form $\nabla_{\!l} \delta(\cdot-\beta)\in\tilde{V}_{\Omega_V}$, $\beta\in\ZZ^s$, $l\in\{1,\ldots,s\}$, is associated uniquely to an edge in $W$. The graph $G$ is connected, thus, there
exists a corresponding spanning tree consisting of $|\Omega_V|-1$ edges from $W$~\cite[Theorem~1.5.1]{Diestel2005}. Since any spanning tree does not contain cycles,
the set of edges of the spanning tree corresponds to a set of linearly independent sequences in $\tilde{V}_{\Omega_V}$. Thus,
$\operatorname{dim} \tilde{V}_{\Omega_V}=|\Omega_V|-1$.
\end{proof}

\begin{remark} \label{rem:badOmegaC}
 The proof of Proposition~\ref{thm_VkequVbark_existence} explains the phenomenon occurring in Example~\ref{ex_V0_neq_V0bar}.
 The graph corresponding to the set $\Omega_C$ from this example consists of two connected components. This fact
 forces $\operatorname{dim} V_{\Omega_C}> \operatorname{dim} \tilde{V}_{\Omega_C}$.
\end{remark}

\subsection{Proof of Theorem~\ref{th:main}} \label{subsec:proof_main}

\begin{proof} The proof of Theorem~\ref{th:main} generalizes the proofs of~\cite[Proposition~4.6]{Charina} from the stationary setting,
thus, we only sketch the steps of the proof.

\noindent Assumption~\textbf{S}, i.e. the sum rules, for $\mathcal{S}=\{S_j\ : \ j=1, \ldots,J\}$ guarantees the existence of the difference subdivision 
operators $S_j'$ in $\mathcal{S}'=\{S'_j\ : \ j=1, \ldots,J\}$. 
Moreover, Assumption~\textbf{S}  and~$(ii)$, by Lemma~\ref{thm_invariantone} part~$(i)$ and~\cite[Theorem~5.2]{Jia}, ensure that
$V_{\Omega}$ is a common invariant subspace of the transition matrices in $\{ T_{d,j,\Omega} \ :\ d \in D_j, \ j=1,\ldots,J\}$. 
Thus, the restrictions $T_{d,j,\Omega}|_{V_\Omega}$ of the matrices in $\{T_{d,j,\Omega} \ :\ d \in D_j, \ j=1,\ldots,J\}$ to $V_{\Omega}$ are well defined. 

\noindent Note that, by the definitions of the joint and restricted spectral radii, the claim follows from (with 
$T_\ell=T_{d_\ell,j_\ell,\Omega}$ for $d_\ell \in D_{\!j_\ell}$,
$j_\ell \in \{1, \ldots, J\}$ and $\ell=1, \ldots, n$, $n \in \NN$)
$$
  C_1 \ \max_{T_\ell \in \mathcal{T}} \  
  \| T_n \cdots T_1|_{V_\Omega}\|_\infty 
  \leq 
  \|S'_{j_n} \cdots S'_{j_1} |_\nabla\|_\infty 
	\leq 
	C_2 \ \max_{T_\ell \in \mathcal{T}} \ 
	\|T_n \cdots T_1|_{V_\Omega}\|_\infty
$$
with some constants $C_1, C_2 >0$. To determine $C_1$ and $C_2$, we first use the assumption $(ii)$ and an argument similar to the one of~\cite[Lemma~4.5]{Charina} which implies that 
$$
 C_3 \ \max_{T_\ell \in \mathcal{T}} \ 
 \| T_n \cdots T_1|_{V_\Omega}\|_\infty  
 \leq
 \| S'_{j_n} \cdots S'_{j_1} \nabla \delta  \|_\infty 
 \leq  
 C_4 \ \max_{T_\ell \in \mathcal{T}} \ 
 \| T_n \cdots T_1|_{V_\Omega}\|_\infty
$$
with $C_3=|\Omega|^{-2}$ and $C_4=1$. Then, for $K$ in~\eqref{def:K}, define $\Omega_K=([-1,1]^s-K) \cap \ZZ^s$. Due to $\delta \in \ell_\infty(\Omega_K)$ and
$\|\nabla \delta\|_\infty=1$, by  Proposition~\ref{thm_different_RN}, we get
$$
  \|S'_{j_n} \cdots S'_{j_1} |_\nabla\|_\infty  \ge \| S'_{j_n} \cdots S'_{j_1} \nabla \delta  \|_\infty.
$$
Thus, $C_1=C_3$. Moreover, using 

\begin{equation*}
{\nabla} c=
{\nabla}\left(\sum_{\alpha\in\ZZ^s} c(\alpha)\delta(\cdot-\alpha) \right)=
\sum_{\alpha\in\ZZ^s} c(\alpha) {\nabla}\delta(\cdot-\alpha),
\end{equation*}
for the maximizing sequence $c \in \ell_\infty(\Omega_K)$
from Proposition~\ref{thm_different_RN}, we obtain
$$
   \|S'_{j_n} \cdots S'_{j_1} |_\nabla\|_\infty  \le C_5 \| S'_{j_n} \cdots S'_{j_1} \nabla \delta  \|_\infty
$$
with $C_5=|\Omega_K| \cdot \|c\|_\infty$. Thus, $C_2= C_5$.
\end{proof}

\section{Examples} \label{sec:examples}

\noindent Example~\ref{ex_mult1} shows that already in the univariate, stationary case the assumption $V_{\Omega} =\tilde{V}_{\Omega}$ in Theorem~\ref{th:main} is crucial.

\begin{example} \label{ex_mult1} We consider the stationary subdivision scheme with dilation factor
$M=2$ and mask $a \in \ell_0(\ZZ)$ whose non-zero elements are given by
$$
 a(0)=\frac{1}{2}, \quad a(3)=1 \quad \text{and}\quad a(6)=\frac{1}{2}.
$$
It is well known that this subdivision scheme does not converge, although there is a continuous, piecewise linear,
compactly supported on $[0,6]$, solution $\phi$ of the corresponding refinement equation.  For illustration purposes, we choose the digit set $D=\{0,3\}$. The algorithm in Lemma~\ref{alg_invariantomega} generates
the set $\Omega_C=\{0,3\}$ and, by Definition~\ref{def_V0}, we have 
$\operatorname{dim}V_{\Omega_C} =2 > \operatorname{dim}\tilde{V}_{\Omega_C}=0$. The set $\Omega_V$, with the property $\operatorname{dim}V_{\Omega_V} = \operatorname{dim}\tilde{V}_{\Omega_V}$, can be chosen, in this case, to be $\Omega_V=\{-2,\ldots,5\}$. We make this
choice for simplicity reasons, the set $\Omega_V$ from Proposition~\ref{thm_VkequVbark_existence} would be
of size $61$. The corresponding transition matrices
$T_{d,\Omega_V}$, $d \in D$, have the following block form
$$
 T_{0,\Omega_V}=\frac{1}{2}
 \left(\! \begin{array}{cccc} T_{0,\Omega_C}&0 \\0& T_{0,\Omega'}\end{array}\!\right) 
 \quad\text{and}\quad
 T_{3,\Omega_V}=\frac{1}{2}
 \left(\! \begin{array}{cccc} T_{3,\Omega_C}&0 \\0& T_{3,\Omega'}\end{array}\!\right),
 \quad
 \Omega'=\Omega_V \setminus \Omega_C =
 \{-2,-1,1,2,4,5\},
$$
with
$$
 T_{0,\Omega_C}=\frac{1}{2}\left( \begin{array}{cc}1&0\\1&2\end{array}\right), \quad
 T_{3,\Omega_C}=\frac{1}{2}\left( \begin{array}{cc}2&1\\0&1\end{array}\right),
$$
and
$$
 T_{0,\Omega'}=\frac{1}{2}\left( \begin{array}{cccccc}0&0&0&0&0&0\\1&0&0&0&0&0\\0&2&0&1&0&0
 \\1&0&2&0&1&0\\0&0&0&1&0&2\\0&0&0&0&1&0\end{array}\right), \quad
 T_{3,\Omega'}=\frac{1}{2}\left( \begin{array}{cccccc}0&1&0&0&0&0\\2&0&1&0&0&0\\1&0&2&0&1&0
 \\0&0&1&0&2&0\\0&0&0&0&0&1\\0&0&0&0&0&0\end{array}\right).
$$
By Lemma~\ref{thm_invariantone} part~$(ii)$, the space $\ell(\Omega_V)$ is $\mathcal{T}$ invariant. Thus, by
Theorems~\ref{thm_RSR_convergence} and \ref{th:main}, due to $\operatorname{dim}V_{\Omega_V} = \operatorname{dim}\tilde{V}_{\Omega_V}$ and 
$\rho(\{T_{d,\Omega_V}|_{V_{\Omega_V}} : d \in D\})=1$, we get the correct answer that the scheme is not convergent. On the contrary, 
$\rho(\{T_{d,\Omega_C}|_{V_{\Omega_C}} : d \in D\})=1/2$ is misleading. Here, we used the invariant polytope algorithm from~\cite{GP1} for our computations.
\end{example}

\noindent Multivariate example~\ref{ex_mult2} illustrates the properties of multiple subdivision $\mathcal{S}^\NN$
in the case $V_{\Omega_C}=\tilde{V}_{\Omega_C}$.

\begin{example} \label{ex_mult2}
We consider the set $\mathcal{S}=\{(a_j,M_j)\ : \ j=1,2\}$ of subdivision operators from Example~\ref{ex_supp_of_blf} with the
corresponding digit sets
\begin{equation*}
D_1=\left\{
\left(\begin{array}{r}0\\0\end{array}\right),
\left(\begin{array}{r}1\\\!-1\end{array}\right),
\left(\begin{array}{r}1\\0\end{array}\right)
\right\}
\quad\text{and}\quad
D_2=\left\{
\left(\begin{array}{r}0\\0\end{array}\right),
\left(\begin{array}{r}0\\\!-1\end{array}\right),
\left(\begin{array}{r}1\\0\end{array}\right)
\right\}.
\end{equation*}

\begin{figure} \label{fig:Omega_eample_section}
\centering
 \includegraphics[width=.99\textwidth,height=4cm,keepaspectratio]{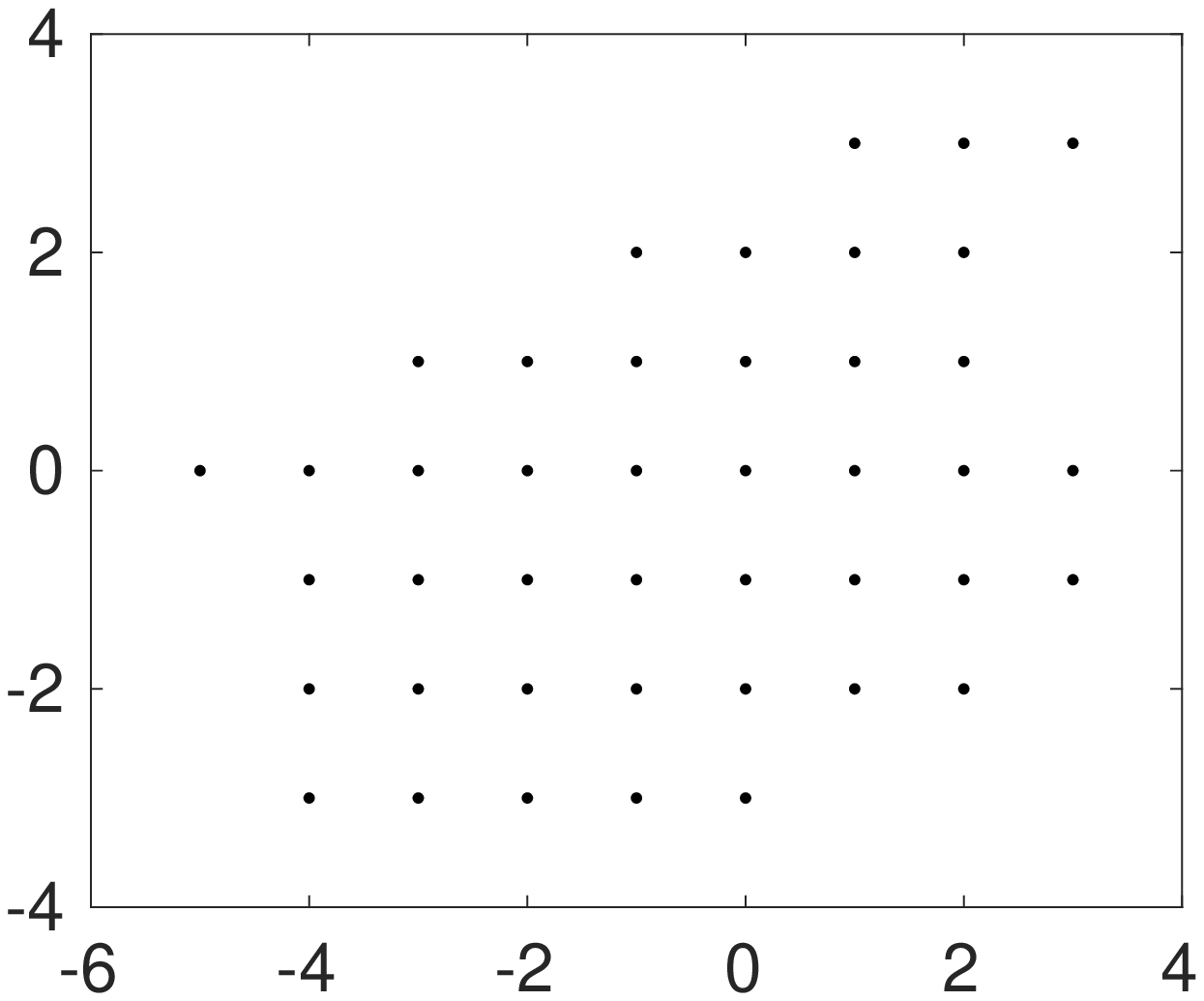}
 \caption{Set $\Omega_C$ from Example~\ref{ex_mult2}.}
\label{fig_mult}
\end{figure}
\noindent Note that the masks $a_1=a_2$ satisfy Assumption~\textbf{S}, i.e. the sum rules.
The set $\Omega_C$ computed by the algorithm in Lemma~\ref{alg_invariantomega} is given on Figure
\ref{fig:Omega_eample_section}.
Note that $\operatorname{dim}V_{\Omega_C}=\operatorname{dim}\tilde{V}_{\Omega_C}$.
By the invariant polytope algorithm from~\cite{GP1} we obtain
\begin{equation*}
 \rho(\{T_{d,\Omega_C,j}|_{V_{\Omega_C}},\ d\in D_{j},\ j=1,2\})= 
\rho\left(
T_{{\tiny\left(\!\!\!\!\begin{array}{c}0\\0\end{array}\!\!\!\!\right)}, \Omega_C,1} 
T_{{\tiny\left(\!\!\!\!\begin{array}{c}1\\0\end{array}\!\!\!\!\right)}, \Omega_C,2} 
\right)^{1/2} = 
0.8971\!\ldots
\end{equation*}
Therefore, by Theorems~\ref{thm_RSR_convergence} and~\ref{th:main}, $\mathcal{S}^\NN$ is convergent.
By~\cite[Theorem~1 and Remark~3]{CP2017} and~\cite[Proposition~3.27]{CHM}, the critical H\"older exponent $\alpha$ of the stationary
subdivision scheme $(S_1)_{\ell \in \NN}$ with the anisotropic dilation matrix $M_1$ satisfies $\alpha \in [0.3446\!\ldots,1]$.
For the stationary subdivision scheme $(S_2)_{\ell \in \NN}$ with the isotropic dilation matrix $M_2$, 
we obtain $\alpha=1$.  For the stationary subdivision scheme $(S_1 S_2)_{\ell \in \NN}$ with the isotropic dilation matrix
$M_1M_2$, we get $\alpha=0.1977\!\ldots\ $.
\end{example}

\section{Acknowledgement}
 Both authors are sponsored by the Austrian Science Foundation (FWF) grant P28287-N35.

\section{Appendix}

\noindent For completeness, we provide the MATLAB code of the algorithm described in
Lemma~\ref{alg_invariantomega} in copy-paste-able format.
The code, together with everything needed to execute the code snippets in this paper, is available for download from 
\href{http://tommsch.com}{http://tommsch.com}
.

\begin{verbatim}
function [ Om ] = Omega(a, M, D, Om)
% a, M, D: cell vector of masks, dilation matrices and digit sets (as column vectors)
% Om: (Optional) the starting set
% ex: Omega({[1:3 2 1]/3,[1:3 2 1]/3},{[2 -1;1 -2],[1 1;1 -2]},{[0:2;0 0 0],[0:2;0 0 0]})
% Out: -[4 4 4 3 3 3 3 3 2 2 2 2 2 1 1 1 1 0 0;4 3 2 4 3 2 1 0 3 2 1 0 -1 2 1 0 -1 0 -1];
dim=size(M{1},1);                         %the dimension
if(nargin==3); Om=zeros(dim,1); end       %if Omega is not given, set it to zero
while(true)
   sizebefore=size(Om,2);                 %used to check if elements where added to Omega
   for j=1:size(a,1)                      %iterate through all subdivision operators
      OmN=M{j}\setplus(supp(a{j},dim),Om,-D{j}); %compute new possible entries
      OmN=round(OmN(:,sum(abs(OmN-round(OmN)),1)<.5/abs(det(M{j})))); %round to integers
      Om=unique([Om OmN]','rows')';       %remove duplicates
   end
   if(size(Om,2)==sizebefore); break; end %if no elements were added, terminate
end

function [ X ] = setplus( varargin )
% setplus(A,B) = { x=a+b : a in A, b in B}, operates column wise
% ex: setplus([1 2; 1 0],[0 -1; -1 -1]); %Output: [0 1 1 2;0 -1 0 -1]
sze=size(varargin,2);     %number of sets
X=varargin{sze};          %the output set
for i=sze-1:-1:1          %iterate through all sets
   A=varargin{i};         %the set to be added
   X=repmat(A,1,size(X,2))+reshape(repmat(X,size(A,2),1),size(A,1),[]); %add the set
   X=unique(X','rows')';  %remove duplicates
end

function [ L ] = supp(a, dim)
% returns the support of an array. First entry is supposed to have index (0,0,...,0)
% ex: supp([1 1; 0 1],2) %Output: [0 0 1; 0 1 1];
L=zeros(dim,nnz(a));      %output variable
CO=cell(1,dim);           %dummy-variable to do calculation with indices
j=1;                      %index-variable for the columns of D
for i=1:numel(a)          %iterate through all elements of the masks
   if(a(i)~=0)            %if the element is nonzero, save the indices
      [CO{:}]=ind2sub(size(a),i); %get the indices
      L(:,j)=[CO{:}]'-1;  %change to zero-based indexing, add converted cell to vector
      j=j+1;              %increase counter
   end
end
\end{verbatim}

\newpage
\section{References}

\end{document}